 \def\1{\raisebox{2pt}{\rm{$\chi$}}}
\newtheorem{theorem}{Theorem}[section]
\newtheorem{corollary}[theorem]{Corollary}
\newtheorem{lemma}[theorem]{Lemma}
\newtheorem{proposition}[theorem]{Proposition}
\newtheorem{definition}[theorem]{Definition}
\newcommand{\R}{{\mathbb R}}
\newcommand{\N}{{\mathbb N}}
\renewcommand{\L}{\mathcal{L}}
 \newcommand{\eps}{{\varepsilon}}
 \def\1{\raisebox{2pt}{\rm{$\chi$}}}
\newcommand{\Lip}{\operatorname{Lip}}
\newcommand{\abs}[1]{\left|#1\right|}
\def\vint_#1{\mathchoice%
          {\mathop{\kern 0.2em\vrule width 0.6em height 0.69678ex depth -0.58065ex
                  \kern -0.8em \intop}\nolimits_{\kern -0.4em#1}}%
          {\mathop{\kern 0.1em\vrule width 0.5em height 0.69678ex depth -0.60387ex
                  \kern -0.6em \intop}\nolimits_{#1}}%
          {\mathop{\kern 0.1em\vrule width 0.5em height 0.69678ex
              depth -0.60387ex
                  \kern -0.6em \intop}\nolimits_{#1}}%
          {\mathop{\kern 0.1em\vrule width 0.5em height 0.69678ex depth -0.60387ex
                  \kern -0.6em \intop}\nolimits_{#1}}}
\def\vintslides_#1{\mathchoice%
          {\mathop{\kern 0.1em\vrule width 0.5em height 0.697ex depth -0.581ex
                  \kern -0.6em \intop}\nolimits_{\kern -0.4em#1}}%
          {\mathop{\kern 0.1em\vrule width 0.3em height 0.697ex depth -0.604ex
                  \kern -0.4em \intop}\nolimits_{#1}}%
          {\mathop{\kern 0.1em\vrule width 0.3em height 0.697ex depth -0.604ex
                  \kern -0.4em \intop}\nolimits_{#1}}%
          {\mathop{\kern 0.1em\vrule width 0.3em height 0.697ex depth -0.604ex
                  \kern -0.4em \intop}\nolimits_{#1}}}
\newcommand{\aveint}[2]{\mathchoice%
          {\mathop{\kern 0.2em\vrule width 0.6em height 0.69678ex depth -0.58065ex
                  \kern -0.8em \intop}\nolimits_{\kern -0.45em#1}^{#2}}%
          {\mathop{\kern 0.1em\vrule width 0.5em height 0.69678ex depth -0.60387ex
                  \kern -0.6em \intop}\nolimits_{#1}^{#2}}%
          {\mathop{\kern 0.1em\vrule width 0.5em height 0.69678ex depth -0.60387ex
                  \kern -0.6em \intop}\nolimits_{#1}^{#2}}%
          {\mathop{\kern 0.1em\vrule width 0.5em height 0.69678ex depth -0.60387ex
                  \kern -0.6em \intop}\nolimits_{#1}^{#2}}}
\newcommand{\Om}{\Omega}
\newcommand{\dist}{\operatorname{dist}}
\def\SL@eqnlefttext #1{\hbox to 0pt{\kern 75 pt 
\llap{\SL@margintext{#1}\quad}\hss}}
\begin{document}

\title[Asymptotic $C^{1,\gamma}$-regularity for values]{Asymptotic $C^{1,\gamma}$-regularity for value functions to uniformly elliptic dynamic programming principles}


\author[Blanc]{Pablo Blanc}
\address{Pablo Blanc \hfill\break\indent
        Departamento  de Matem{\'a}tica, FCEyN,
        Universidad de Buenos Aires,
        Pabe-llon I,
         Ciudad Universitaria (C1428BCW),
        Buenos Aires, Argentina.}
\email{pblanc@dm.uba.ar}

\author[Parviainen]{Mikko Parviainen}
\address{Mikko Parviainen \hfill\break\indent
Department of Mathematics and Statistics, University of
Jyv\"askyl\"a, PO~Box~35, FI-40014 Jyv\"askyl\"a, Finland.}
\email{mikko.j.parviainen@jyu.fi}

   \author[Rossi]{Julio D. Rossi} \address{Julio D. Rossi\hfill\break\indent
        Departamento  de Matem{\'a}tica, FCEyN,
        Universidad de Buenos Aires,
        Pabe-llon I,
         Ciudad Universitaria (C1428BCW),
        Buenos Aires, Argentina.}
\email{jrossi@dm.uba.ar}


\date{\today}
\keywords{asymptotic regularity estimates, dynamic programming principles, Isaacs operators, stochastic games} 
\subjclass[2020]{91A05, 91A15, 35D40, 35B65.}

\begin{abstract}
In this paper we prove an asymptotic $C^{1,\gamma}$-estimate for value functions of stochastic processes related to uniformly elliptic dynamic programming principles.  As an application, this allows us to pass to the limit with a discrete gradient and then to obtain a $C^{1,\gamma}$-result for the corresponding limit PDE.
\end{abstract}

\maketitle


\section{Introduction}
\label{sec:intro}

In this paper we deal with regularity estimates for solutions to certain 
equations that appear naturally when one considers value functions of 
stochastic games. 
We consider a class of discrete stochastic processes or equivalently functions satisfying a \textit{dynamic programming principle (DPP)}.
To be more precise, for example, we can consider functions that satisfy a uniformly elliptic DPP such as 
\[
u(x)=\alpha \frac{u(x+\eps \nu)+u(x-\eps \nu)}{2}
+\beta\vint_{B_1} u(x+\eps y)\,dy
\]
where $\nu\in B_\Lambda$ is a fixed vector, $B_{\Lambda}$ is a ball in $\R^n$ of radius $\Lambda$ and the parameters that appear in the equation verify  $\beta,\eps,\Lambda>0$, $\alpha\geq 0$, and  $\alpha+\beta=1$. 
Our result also applies to solutions of the nonlinear DPP given by
\[
u(x)=\alpha \sup_{\nu\in B_\Lambda} \frac{u(x+\eps \nu)+u(x-\eps \nu)}{2}
+\beta\vint_{B_1} u(x+\eps y)\,dy.
\]
In \cite{brustadlm20} a control problem associated to the nonlinear example is presented and, in the limit as $\eps \to 0$, a local PDE involving the dominative $p$-Laplacian operator arises.
Heuristically, the above DPP can be understood by considering a value $u$ at $x$, which can be computed by summing up different outcomes with corresponding probabilities: either a maximizing controller who gets to choose $\nu$ wins (probability $\alpha$), or a random step occurs (with probability $\beta$) within a ball of radius $\varepsilon$. If the controller wins, the position moves to $x+\eps \nu$ (probability $1/2$) or to $x-\eps \nu$ (probability $1/2$). 
 Also Isaacs type dynamic programming principle 
\[
u(x)=\alpha \sup_{V \in \mathcal V}\inf_{\nu \in V}  \frac{u(x+\eps \nu)+u(x-\eps \nu)}{2}
+\beta\vint_{B_1} u(x+\eps y)\,dy,
\]
with  $\mathcal V \subset\mathcal P(B_\Lambda)$, a subset of the power set, and $\beta>0$ can be mentioned as an example.

Our main contribution is a $C^{1,\gamma}$-estimate in Theorem \ref{teo.C1a} and with $x$-dependent Lipschitz running cost in Theorem \ref{teo.C1a.f}. 
To obtain these results, we consider a difference quotient and first show that it is bounded by using the H\"older results obtained in \cite{arroyobp}, and then that the quotient satisfies suitable extremal inequalities.
Then we use H\"older results from \cite{arroyobp} on the quotient, and iterate to obtain the $C^{1,\gamma}$-estimate.
This sketch is along the lines of the PDE proof in Section 5.3 of \cite{caffarellic95}, see also \cite{caffarellis09} and \cite{caffarellitu20}. However, there is a substantial difficulty: it is well known that value functions we consider are not even continuous (see Section \ref{sec:dis} below). The regularity techniques in PDEs use heuristically speaking the fact that there is information available in all scales. Concretely, scaling arguments in the space variable are freely used in those proofs. 
For a discrete process, the step size $\eps$ sets a natural limit for the scale, and this limitation has some crucial effects, for example the mentioned discontinuity. Thus even a meaningful statement of $C^{1,\gamma}$-estimate is not immediate in this context, and $\eps$ needs to be carefully taken into account in the proofs. The $C^{1,\gamma}$-estimate 
\[
\left|u(x)-2u\left(\frac{x+y}{2}\right)+u(y)\right|\leq C\Big(|x-y|^{1+\gamma}+\eps^{1+\gamma}\Big),
\]
and the other estimate
we obtain in Theorem \ref{teo.C1a}
 are and need to be necessarily asymptotic
 (notice the extra term $\eps^{1+\gamma}$ that appears in the right hand side). To the best of our knowledge, only H\"older and Lipschitz estimates, for example in \cite{peress08, luirops13,luirop18} as well as in \cite{arroyolpr20} (under some additional assumptions, and different operators), have been so far available in this context. 
 
The $C^{1,\gamma}$-estimates in Theorem \ref{teo.C1a} suffice in many applications, for example: in passing to the limit with value functions and discrete gradients, and then obtaining the $C^{1,\gamma}$-estimate for the corresponding limiting PDE. We discuss this in Section \ref{sec:convergence}.
As examples of the limit PDEs that can appear in our context, let us mention
 $$ 
 \lambda_k (D^2u) + \Delta u = f, 
 $$
 where 
 $$
 \lambda_k (D^2 u) = \inf_{dim (V) = k} \sup_{v \in V} \langle D^2u \, v; v \rangle
 $$
 is the $k-$th eigenvalue of $D^2u$, and $V$ is a subspace of a specified dimension, see also \cite{blancr19}. The applicability of the results is by no means limited to this example, but rather apply to many kind of fully nonlinear uniformly elliptic PDEs.

The H\"older regularity with mild assumptions  has been crucial in obtaining the higher regularity results in the past: De Giorgi and Nash established such an estimate for divergence form equations and thus solved the final step in Hilbert's 19th problem. In 1979 Krylov and Safonov \cite{krylovs79} established H\"older regularity for second order elliptic equations in nondivergence form with merely bounded and measurable coefficients.  This opened a way to develop higher regularity theory for fully nonlinear elliptic equations. In the case of the viscosity solutions  in this context $C^{1,\gamma}$-regularity among other results can be found in \cite{caffarelli89}.  For an account and further references about this development, see for example \cite{gilbargt01,caffarellic95}.
The results in \cite{arroyobp, arroyobp2} give us a counterpart for Krylov-Safonov regularity results for the discrete stochastic processes we are studying here, and are a key to the higher regularity established in this paper. 
The solution $u$ may not be continuous, but the size of the jumps can be controlled in terms of $\eps$. 
In \cite{arroyobp} an asymptotic $\gamma$-H\"older regularity result was obtained, that is 
\[
|u(x)-u(z)|\leq C\Big(|x-z|^\gamma+\eps^\gamma\Big),
\]
see Theorem~\ref{HolderL+L-} for details.

As an example that is not covered by our results, we mention
the tug-of-war with noise. In this case the simplest version of the dynamic programming principle with the running cost reads as 
\begin{align*}
u(x)=\frac{\alpha}{2}\left( \sup_{B_\varepsilon(x)} u + \inf_{B_ \varepsilon(x) } u\right)+\beta \vint_{B_\varepsilon(x)} u(z) dz + \varepsilon^2 f(x),
\end{align*}
where $\alpha+\beta=1,\ \alpha,\beta\ge 0$, see \cite{manfredipr12}. The H\"older estimates in \cite[Section 7]{arroyobp} hold for this example: the key difference is that now extremal inequalities are needed for a difference quotient, cf.\ (\ref{eq:quotient-super-sub}), and for the tug-of-war with noise such inequalities do not hold for the difference.
The  $C^{1,\gamma}$-regularity for the tug-of-war game with noise is a well known open problem in the field; a related problem for pure tug-of-war is mentioned in \cite{peresssw09}. In the PDE context, tug-of-war with noise is related to the $p$-Laplace equation,
$\Delta_p u = \mbox{div} (|\nabla u |^{p-2} \nabla u) =f$.

\section{Preliminaries}

Let us start with the dynamic programming principle given as an example in the introduction
\[
u(x)=\alpha \sup_{\nu\in B_\Lambda} \frac{u(x+\eps \nu)+u(x-\eps \nu)}{2}
+\beta\vint_{B_1} u(x+\eps y)\,dy
\]
where $\beta,\eps,\Lambda>0$, $\alpha\geq 0$, $\alpha+\beta=1$, and $\vint_{B_1}:=\abs{B_1}^{-1}\int_{B_1}$ denotes the integral average.
We can rewrite this as $$\L^+_\eps u=0$$ where $\L^+_\eps u$ is as in the next definition, and represents an extremal operator analogous to the extremal Pucci operator in the PDE theory.

\begin{definition}
Let $u:\R^n\to\R$ be a bounded Borel measurable function. We define the extremal operator
\begin{equation*}
\begin{split}
	\L_\eps^+ u(x)
	=
	~&
	\frac{1}{2\eps^2}\bigg(\alpha \sup_{\nu\in B_\Lambda} \delta u(x,\eps \nu) +\beta\vint_{B_1} \delta u(x,\eps y)\,dy\bigg),
\end{split}
\end{equation*}
where $\delta u(x,\eps y)=u(x+\eps y)+u(x-\eps y)-2u(x)$. Operator $\L_\eps^-$ is defined analogously just replacing $\sup$ by $\inf$.
\end{definition}

By using the above extremal operators, we specify the admissible operators we consider in this paper. 
\begin{definition}[Admissible operator]
\label{def:our-operator}
We consider operators $\L_\eps$ defined on bounded Borel functions such that
\begin{enumerate} [label=(H\arabic*),ref=(H\arabic*)]
\item
\label{h1}
$\L_\eps$ is uniformly elliptic, i.e.\
\[
\L_\eps^- v\leq \L_\eps(u+v)-\L_\eps  u\leq \L_\eps^+  v,
\]
\item
\label{h2}
$\L_\eps$ is independent of $x$, that is for $u_y(x)=u(x+y)$,
\[
\L_\eps u=0 \quad \text{implies} \quad \L_\eps u_y=0. 
\]
\end{enumerate}
\end{definition}

For example, $\L_\eps^-$, $\L_\eps^+$ and 
\[
\L_\eps u(x)=\frac{1}{2\eps^2}\left(\alpha\delta u(x,\eps \nu)+\beta \vint_{B_1} \delta u(x,\eps z) \,dz\right)
\]
where $\nu\in B_\Lambda$ is a fixed vector, are admissible.
We can also consider
\[
\L_\eps u(x)=\frac{1}{2\eps^2}\left(\alpha\sup_{\nu \in V}\delta u(x,\eps \nu)+\beta \vint_{B_1} \delta u(x,\eps z) \,dz\right)
\]
where $V\subset B_\Lambda$.
More generally, we can also consider Isaacs type operators
\[
\L_\eps u(x)=\frac{1}{2\eps^2}\left(\alpha\sup_{V \in \mathcal V}\inf_{\nu \in V}\delta u(x,\eps \nu)+\beta \vint_{B_1} \delta u(x,\eps z) \,dz\right)
\]
where  $\mathcal V \subset\mathcal P(B_\Lambda)$, a subset of the power set.
We can also consider this operator interchanging the order of $\sup$ and $\inf$.

\section{Asymptotic $C^{1,\gamma}$-regularity}

Our starting point is the asymptotic $\gamma$-H\"older theorem from \cite{arroyobp}, see also \cite{arroyobp2}. This is in a sense the Krylov-Safonov \cite{krylovs79} regularity estimate for our discrete operators. 
\begin{theorem}[\cite{arroyobp}]
\label{HolderL+L-}
There exists $\eps_0>0$ such that if a bounded Borel function $u$ satisfies 
\begin{equation}
\label{L+L-ineq}
\L_\eps^+u\ge -\rho \quad \mbox{ and } \quad \L_\eps^-u\le  \rho, 
\end{equation}
in $B_{2R}$ where $\eps<\eps_0R$ and $\rho\geq 0$, then there exist $\gamma>0$ and $C>0$ such that
\begin{equation}
\label{asymHolder}
|u(x)-u(z)|\leq \frac{C}{R^\gamma}\left(\|{u}\|_{L^{\infty}(B_{2R})}+R^2\rho\right)\Big(|x-z|^\gamma+\eps^\gamma\Big)
\end{equation}
for every $x, z\in B_R$.
\end{theorem}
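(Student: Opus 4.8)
Theorem~\ref{HolderL+L-} is cited from \cite{arroyobp}; the route I would take is the classical Krylov--Safonov/De~Giorgi scheme, carrying the step size $\eps$ along at every stage.

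\textbf{Step 1 (scaling, dyadic iteration, and the origin of the $\eps^\gamma$).} First I would reduce to $R=1$: since $\delta(u(R\,\cdot))(x,sy)=\delta u(Rx,Rsy)$, one gets $\L_{\eps/R}^+(u(R\,\cdot))(x)=R^2\L_\eps^+ u(Rx)$ and likewise for $\L_\eps^-$, so after also dividing by $\|u\|_{L^\infty(B_{2R})}+R^2\rho$ it suffices to treat $R=1$, $\|u\|_{L^\infty(B_2)}\le1$, $\rho\le1$, $\eps<\eps_0$, and to prove $|u(x)-u(z)|\le C(|x-z|^\gamma+\eps^\gamma)$ for $x,z\in B_1$. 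The whole proof then rests on a \emph{discrete oscillation-decay lemma}: there are $\eps_0,\lambda,\eta\in(0,1)$ and $C$, depending only on $n,\Lambda,\beta$, such that whenever $B_r(x_0)\subset B_2$ and $\eps\le r\le\tfrac14$,
\[
\osc_{B_{\lambda r}(x_0)}u\ \le\ \eta\,\osc_{B_r(x_0)}u+Cr^2\rho .
\]
Granting this, I fix $z\in B_{1/2}$, iterate the lemma along $r_k=\lambda^k$ starting from $r_0$ of order one and down to the last $k$ with $r_k\ge\eps$, and sum the geometric series (using $\osc_{B_{r_0}}u\le2$, $\rho\le1$); this gives $\osc_{B_{r_k}(z)}u\le Cr_k^\gamma$ with $\gamma=\log(1/\eta)/\log(1/\lambda)$ for all such $k$ (the hypotheses hold on all of $B_2$, so the lemma indeed applies on the balls $B_{r_k}(z)$). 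For $x,z\in B_{1/2}$ with $|x-z|\ge\eps$ I pick $k$ with $r_{k+1}\le|x-z|<r_k$ and conclude $|u(x)-u(z)|\le\osc_{B_{r_k}(z)}u\lesssim|x-z|^\gamma$; for $|x-z|<\eps$ I simply use $|u(x)-u(z)|\le\osc_{B_\eps(z)}u\lesssim\eps^\gamma$. This last case --- the iteration is forced to stop at the step size, and moreover $u$ may jump by $O(\eps)$ --- is exactly why the $\eps^\gamma$ term appears and is unavoidable. A finite covering of $B_1$ by balls of radius $\tfrac14$ upgrades the estimate from $B_{1/2}$ to $B_1$.

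\textbf{Step 2 (oscillation decay from a weak Harnack inequality).} Fix $B_r(x_0)\subset B_2$ with $\eps\le r\le\tfrac14$ and put $M=\sup_{B_r}u$, $m=\inf_{B_r}u$. If $M-m\le r^2\rho$ there is nothing to do; otherwise set $w=(u-m)/(M-m)$, so $0\le w\le1$ on $B_r$ and $\L_\eps^- w\le\rho_w$ with $r^2\rho_w=r^2\rho/(M-m)<1$; since the pair of hypotheses is preserved by $u\mapsto-u$, the function $1-w$ is likewise a nonnegative supersolution. One of $\{w\ge\tfrac12\}$, $\{1-w\ge\tfrac12\}$ fills at least half of $B_{r/2}$; call the corresponding function $v\ge0$. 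Applying the weak Harnack inequality of Step~3 on $B_{r/2}$ gives $\inf_{B_{r/4}}v\ge c\bigl(\vint_{B_{r/2}}v^{p_0}\,dx\bigr)^{1/p_0}-Cr^2\rho_w\ge\delta-Cr^2\rho_w$ for some $\delta>0$ (the $L^{p_0}$-average being bounded below because $v\ge\tfrac12$ on half of $B_{r/2}$), hence $\osc_{B_{r/4}}w\le1-\delta+Cr^2\rho_w$; multiplying through by $M-m$ yields $\osc_{B_{r/4}}u\le(1-\delta)\osc_{B_r}u+Cr^2\rho$, i.e.\ the oscillation-decay lemma with $\lambda=\tfrac14$, $\eta=1-\delta$.

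\textbf{Step 3 (the point estimate and weak Harnack --- the crux).} What remains is the discrete, asymptotic weak Harnack inequality used above: there exist $\eps_0\in(0,1)$, $p_0>0$, $C\ge1$, depending only on $n,\Lambda,\beta$, so that any $v\ge0$ on $B_{2r}$ with $\L_\eps^- v\le\rho$ in $B_{2r}$ and $\eps\le\eps_0 r$ satisfies $\bigl(\vint_{B_r}v^{p_0}\,dx\bigr)^{1/p_0}\le C\bigl(\inf_{B_{r/2}}v+r^2\rho\bigr)$. Following Krylov--Safonov, this is deduced from a single \emph{point estimate} --- there are $\mu\in(0,1)$, $M_0>0$ with: if in addition $\inf_{B_r}v\le1$ and $r^2\rho\le1$, then $|\{v\le M_0\}\cap B_r|\ge\mu|B_r|$ --- by iterating it over dyadic cubes with a Calder\'on--Zygmund/``ink-spots'' covering argument adapted so that cubes never shrink below order $\eps$. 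I expect this point estimate to be the \textbf{main obstacle}, and it is where the analysis genuinely departs from the PDE setting. Its proof needs: (a) a \emph{discrete barrier} $\Phi$ that is large on a small core ball, nonpositive on an outer annulus, and satisfies $\L_\eps^+\Phi\le0$ at every point at distance $\gtrsim\eps$ from the core --- verifying this one-sided inequality means controlling both the term $\alpha\sup_\nu\delta\Phi(x,\eps\nu)$ and the genuinely nonlocal term $\beta\vint_{B_1}\delta\Phi(x,\eps y)\,dy$, which is delicate near the core and forces $\eps_0$ small and the inequality to be demanded only $\eps$-away from the singularity; and (b) a discrete Aleksandrov--Bakelman--Pucci estimate --- or, equivalently, a hitting-time argument for the random walk driving the DPP --- exploiting the genuine diffusion $\beta>0$ to spread the contact set and thereby turn the pointwise smallness $\inf_{B_r}v\le1$ into smallness of $v$ on a set of definite measure. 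It is precisely ingredients (a)--(b) for the \emph{difference quotient} that fail for tug-of-war with noise, which is why that model is excluded, whereas here one argues directly with the extremal operators and $\beta>0$. The careful execution of Step~3, with all $\eps$-dependences tracked, is the content of \cite{arroyobp}; see also \cite{arroyobp2}.
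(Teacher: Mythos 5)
You should note at the outset that the paper you are working from does not prove Theorem~\ref{HolderL+L-} at all: it is imported verbatim from \cite{arroyobp} (see also \cite{arroyobp2}) and used as a black box, so there is no in-paper argument to compare yours against. Judged on its own terms, your outline is the standard Krylov--Safonov/De~Giorgi scheme and is consistent with the strategy of the cited reference: the scaling reduction in your Step~1 matches the computation \eqref{scaleL+} that the present paper does carry out, the dyadic iteration correctly explains why the estimate must be asymptotic (the oscillation decay cannot be iterated below scale $\eps$, and $u$ may genuinely jump by $O(\eps)$, cf.\ Section~\ref{sec:dis}), and the reduction of oscillation decay to a weak Harnack inequality for the two normalized supersolutions $w$ and $1-w$ is the right mechanism, with the two-sided hypothesis \eqref{L+L-ineq} used exactly where you use it.

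However, as a proof your proposal is a roadmap rather than an argument: all of the genuinely hard content is concentrated in your Step~3 (the discrete barrier for $\L_\eps^+$, the measure/point estimate replacing Aleksandrov--Bakelman--Pucci, and the Calder\'on--Zygmund/ink-spots covering adapted so that cubes never fall below scale $\eps$), and there you explicitly defer to \cite{arroyobp} instead of constructing these objects. You correctly identify that this is where the discrete setting departs from \cite{caffarellic95} and why $\beta>0$ is essential, but naming the obstacle is not the same as overcoming it. One further technical point you should make explicit in Step~2: the supersolution property and the bound $0\le w\le 1$ are only available on $B_r$, while the operators $\L_\eps^{\pm}$ look a distance $\eps\Lambda$ beyond the point of evaluation; this is harmless precisely because $\eps<\eps_0 r$ with $\eps_0$ small, but it should be said, since it is one of the places where the restriction $\eps<\eps_0 R$ in the statement is actually consumed.
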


Above $\eps_0$, $\gamma$ and $C$ only depend on the values $\Lambda$, $\alpha$, $\beta$, and the dimension $n$ (that appear in the definition of $\L_\eps^+$), but not on $\eps$. The precise value of $C$ might vary as usual. Here we also assume $\Lambda>0$ without loss of generality.

Recall that our standing assumption is that $\L_\eps$ is admissible (uniformly elliptic and translation invariant).
The H\"older result implies that solutions to 
$$\L_\eps u=0,$$ 
are locally asymptotically $\gamma$-H\"older. 
Our first goal is to prove that the exponent $\gamma$ can be taken to be 1, that is, the solutions are locally asymptotically Lipschitz. 

\begin{theorem}
\label{Lipschitz}
There exist $\eps_0,\theta,C>0$ such that if a bounded Borel function $u$ satisfies  $\L_\eps u=0$ in $B_R$ with $\eps<R\eps_0$, then 
\begin{equation}
\label{LipEq}
|u(x)-u(z)|\leq \frac{C}{R} \|{u}\|_{L^{\infty}(B_{2R})} \big(|x-z|+\eps\big)
\end{equation}
for every $x, z\in B_{\theta R}$.
\end{theorem}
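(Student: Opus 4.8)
I would approach this by a finite bootstrap on the H\"older exponent of $u$, along the lines of the passage from the Krylov--Safonov estimate to $C^{1,\gamma}$ in Section~5.3 of \cite{caffarellic95}, but carried out entirely at scales $\gtrsim\eps$. Throughout, $\gamma\in(0,1)$ denotes the exponent of Theorem~\ref{HolderL+L-}; since that theorem stays valid after decreasing $\gamma$, I may assume in addition that $k\gamma\neq1$ for every $k\in\N$. By the rescaling $u(\cdot)\mapsto u(R\,\cdot)/\|u\|_{L^\infty(B_{2R})}$, which replaces $\L_\eps$ by $\L_{\eps/R}$ and the hypothesis $\eps<\eps_0 R$ by $\eps/R<\eps_0$, it suffices to treat $R=1$, $\|u\|_{L^\infty(B_2)}\le1$, $\L_\eps u=0$ in $B_1$. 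Since $\L_\eps u=0$ forces the extremal inequalities \eqref{L+L-ineq} with $\rho=0$ (by \ref{h1}), Theorem~\ref{HolderL+L-} gives the base estimate $|u(x)-u(z)|\le C_0|x-z|^{\gamma_0}$ for $x,z\in B_{\rho_0}$ with $|x-z|\ge\eps$, where $\gamma_0=\gamma$ and $\rho_0=\tfrac12$.

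The inductive step I would establish is: \emph{if $|u(x)-u(z)|\le C_k|x-z|^{\gamma_k}$ for $x,z\in B_{\rho_k}$ with $|x-z|\ge\eps$ and $\gamma_k<1$, then the same holds with $\gamma_{k+1}=\min(\gamma_k+\gamma,1)$, $\rho_{k+1}=c\rho_k$ and $C_{k+1}$ controlled in terms of $C_k,\rho_k$ and $n,\Lambda,\alpha,\beta$.} To prove it I fix a vector $h$ with $\eps\le|h|\le\rho_k/2$ and consider the rescaled difference quotient $v_h(x)=|h|^{-\gamma_k}\big(u(x+h)-u(x)\big)$; the inductive hypothesis gives $\|v_h\|_{L^\infty(B_{\rho_k/2})}\le C_k$. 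Since $\L_\eps u=0$ and, by translation invariance \ref{h2}, $u(\cdot+h)$ also solves the equation on $B_{\rho_k/2}$, uniform ellipticity \ref{h1} together with the positive $1$-homogeneity of $\L_\eps^{\pm}$ shows that $v_h$ satisfies $\L_\eps^+v_h\ge0$ and $\L_\eps^-v_h\le0$ there --- it is precisely here, on a difference quotient rather than on a difference, that ellipticity of $\L_\eps$ is used, which is why the tug-of-war with noise is excluded. Applying Theorem~\ref{HolderL+L-} to $v_h$ (with constants independent of $h$) and evaluating the resulting estimate at $z=x-h$, where $|x-z|=|h|\ge\eps$ so that $|h|^\gamma+\eps^\gamma\le2|h|^\gamma$, I obtain the $\eps$-free bound on second differences
\[
|u(x+h)+u(x-h)-2u(x)|\le M_k|h|^{\gamma_k+\gamma},\qquad M_k\le C C_k\rho_k^{-\gamma},
\]
valid for $x$ in a ball of radius comparable to $\rho_k$ and all $\eps\le|h|\lesssim\rho_k$.

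The last part of the step is to convert this into a first-difference bound with the improved exponent. Given $x,z\in B_{\rho_{k+1}}$ with $\ell=|x-z|\in[\eps,\rho_{k+1}]$ (the case $\ell>\rho_{k+1}$ being immediate from $\|u\|_{L^\infty}\le1$), set $g(t)=u\big(x+t\tfrac{z-x}{\ell}\big)$ and consider the slopes $m(s)=\tfrac{g(s)-g(0)}{s}$ for $s\in[\ell,\rho']$ with $\rho'\sim\rho_k$; the identity $2s\big(m(2s)-m(s)\big)=g(2s)+g(0)-2g(s)$ and the second-difference bound give $|m(2s)-m(s)|\le\tfrac12 M_k s^{\gamma_k+\gamma-1}$. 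Summing this over the dyadic scales from $\rho'$ \emph{down to $\ell$}, and never below $\ell$ --- which is exactly what the $\eps$-limitation still permits --- and using $|m(\rho')|\le2/\rho'$, I get $|m(\ell)|\le C(1+M_k)$ when $\gamma_k+\gamma>1$ (the geometric series converges) and $|m(\ell)|\le C(1+M_k)\ell^{\gamma_k+\gamma-1}$ when $\gamma_k+\gamma<1$; multiplying by $\ell$ yields the claimed estimate (here $\ell\le\rho'\le1$ is used). After $k^\ast=O(1/\gamma)$ iterations one reaches $\gamma_{k^\ast}<1<\gamma_{k^\ast}+\gamma$, and a final application --- the convergent case --- produces $|u(x)-u(z)|\le K|x-z|$ for all $x,z\in B_\theta$ with $|x-z|\ge\eps$, where $\theta$ and $K$ depend only on $n,\Lambda,\alpha,\beta$. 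For $|x-z|<\eps$ I would then pick a point $p$ with $|p-x|,|p-z|\in[\eps,2\eps)$ and apply the previous bound twice to get $|u(x)-u(z)|\le3K\eps$; combining the two cases gives $|u(x)-u(z)|\le C(|x-z|+\eps)$ on $B_\theta$, and undoing the normalization yields \eqref{LipEq}.

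The main obstacle is the $\eps$-truncation in this last conversion. In the classical PDE proof the slopes $m(s)$ are telescoped all the way down to scale zero, where they converge to a derivative; here one cannot go below scale $\eps$, and telescoping down to scale $\eps$ would produce multiplicative factors of size $(|x-z|/\eps)^{1-\gamma_k}$ that blow up as $\eps\to0$, destroying any $\eps$-uniform bound. The fix is to telescope only down to the target scale $\ell=|x-z|$, anchoring the slope at a fixed macroscopic scale, and, correspondingly, to make sure the second-difference estimate feeding the telescoping is genuinely free of any additive $\eps^\gamma$ when $|h|\ge\eps$ (this is why the $\eps^\gamma$ coming from Theorem~\ref{HolderL+L-} must be absorbed into $|h|^\gamma$). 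One also has to check that the finitely many exponents $\gamma_k=k\gamma$ never hit the borderline value $1$ --- arranged by the harmless decrease of $\gamma$ above; otherwise the telescoping sum picks up a logarithm and yields only a Zygmund-type estimate.
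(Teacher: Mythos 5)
Your proposal is correct and follows essentially the same route as the paper: bound the difference quotient via Theorem~\ref{HolderL+L-}, derive the extremal inequalities for it from \ref{h1}, \ref{h2} and $1$-homogeneity, reapply Theorem~\ref{HolderL+L-}, upgrade the exponent by a dyadic telescoping of second differences (the paper's Lemma~\ref{lem:iter}), and iterate finitely many times with $\gamma$ adjusted so that $k\gamma\neq1$. The only differences are organizational: you restrict to increments $|h|\ge\eps$ and patch the sub-$\eps$ scales at the end via an intermediate point, whereas the paper keeps the additive $\eps^\sigma$ in the quotient's denominator and propagates the $\eps^{\sigma+\tau}$ error explicitly through the iteration.
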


The strategy that we use to prove our results can be described as follows: In Section 5.3 from \cite{caffarellic95}, $C^{1,\gamma}$-regularity is proved for solutions of certain PDEs.
We follow a similar path, but we have to include an extra $\eps$ term.
This is necessary, as pointed out in the introduction,  since the solutions to $\L_\eps u=0$ need not to be even continuous.  The idea is  to consider the difference quotient (\ref{eq:diff-quot}) below, and first show that it is bounded by using Theorem~\ref{HolderL+L-} on $u$, and that the quotient satisfies suitable extremal inequalities. Then we can use Theorem~\ref{HolderL+L-} on the quotient, and obtain the Lipschitz estimate Theorem~\ref{Lipschitz} by iterating the resulting estimate. Finally, doing one more iteration in  Theorem \ref{teo.C1a}, we obtain the $C^{1,\gamma}$-estimate.

Given $u:\R^n\to \R$ and $e$, a unit vector, we define
\begin{align}
\label{eq:diff-quot}
u_h^\gamma(x)=R^\gamma\frac{u(x+eh)-u(x)}{|h|^\gamma+\eps^\gamma}
\end{align}
for $h\in \R$.
Observe that if $\L_\eps u=0$, then by Theorem~\ref{HolderL+L-} we have that $u_h^\gamma$ is bounded, since by this theorem 
\begin{align*}
\abs{u(x+eh)-u(x)}\le \frac{C}{R^\gamma}\|{u}\|_{L^\infty(B_{2R})} (|h|^{\gamma}+\eps^{\gamma})
\end{align*}
for $x\in B_{R/2}$ and $|h|<R/2$, 
and thus it holds that
\begin{equation}
\label{uh<u:bound}
\|u_h^\gamma\|_{L^\infty(B_{R/2})}\le C\|{u}\|_{L^\infty(B_{2R})}.
\end{equation}

Now, we have that \ref{h1} implies that if $\L_\eps u \geq 0$ and $\L_\eps v \leq 0$ then 
\[
\L_\eps^+(u-v)\geq 0.
\]
In fact, we have
$
0\leq \L_\eps u \leq \L_\eps v +\L_\eps^+(u-v)\leq \L_\eps^+(u-v).
$
Analogously, if $\L_\eps u \leq 0$ and $\L_\eps v \geq 0,$
then 
\begin{align*}
\L_\eps^-(u-v)\leq 0.
\end{align*}
So if $\L_\eps u=0$ then by hypothesis \ref{h1}, \ref{h2} and the one homogeneity of $\L_\eps^+$ and $\L_\eps^-$, we get 
\begin{align}
\label{eq:quotient-super-sub}
\L_\eps^- u_h^\gamma \leq 0\leq \L_\eps^+ u_h^\gamma.
\end{align}
That is, $u_h^\gamma$ is under the hypothesis of Theorem~\ref{HolderL+L-}.
So now our goal is to translate the resulting H\"older estimate for $u_h^\gamma$ combined with \eqref{uh<u:bound} into a result for $u$.
We do that in the following lemma, Lemma~\ref{lem:iter}.

Observe that if we consider $\tilde u$ given by $\tilde u(x)=u(xR)$ and $\tilde \eps=\eps / R$, then $\L^+_{\tilde \eps} \tilde u(x)=R^2\L^+_\eps u(Rx)$.
In fact, we have
$$
\delta \tilde{u}(x, \tilde{\eps} y) = \tilde{u}(x+
\tilde{\eps} y)+\tilde{u}(x- \tilde{\eps} y)-2 \tilde{u}(x)
= u( R x+
\eps y)+ u( R x- \eps y)-2 u (Rx)
$$
and hence
\begin{equation}
\label{scaleL+}
\begin{array}{l}
\displaystyle 
\L^+_{\tilde \eps} \tilde u (x) = \frac{1}{2 \tilde{\eps}^2}\bigg(\alpha \sup_{\nu\in B_\Lambda} \delta \tilde{u}(x, \tilde{\eps} \nu) +\beta\vint_{B_1} \delta \tilde{u}(x, \tilde{\eps} y)\,dy\bigg)
\\[10pt]
\ \displaystyle =  \frac{R^2}{2 \eps^2}\bigg(\alpha \sup_{\nu\in B_\Lambda} \delta u ( R x, \eps \nu) +\beta\vint_{B_1} \delta u( R x, \eps y)\,dy\bigg) = R^2
\L^+_{\eps} u (R x).
\end{array}
\end{equation}
So, $\L_\eps^+ u\geq 0$ is equivalent to $\L_{\tilde\eps}^+ \tilde u\geq 0$.
Also, if we consider with slight a abuse of notation
\begin{align}
\label{eq:difference-quotient-withoutR}
\tilde u_{\tilde h}^\gamma(x)=\frac{\tilde u(x+e \tilde h)-\tilde u(x)}{|\tilde h|^\gamma+\tilde \eps^\gamma},
\end{align}
where $\tilde h=h / R$, by the 1-homogeneity of $\L_\eps^+$ combined with a computation as in \eqref{scaleL+}, we get $\L_{\tilde\eps}^+(\tilde u_{\tilde h}^\gamma)=R^2\L_\eps^+(u_h^\gamma)$.
Therefore $\L_\eps^+(u_h^\gamma) \geq 0$ is equivalent to $\L_{\tilde\eps}^+(\tilde u_{\tilde h}^\gamma) \geq 0$.

These are the inequalities that we need in Lemma~\ref{lem:iter}, Theorem~\ref{Lipschitz} and Theorem~\ref{teo.C1a} in order to apply Theorem~\ref{HolderL+L-}.
Also \eqref{LipEq}, \eqref{hip33}, \eqref{con33}, \eqref{con34a} and \eqref{con34b} scale well with $R$.
So, both the hypothesis and conclusions of the results just mentioned are equivalent when applying them to $u$ or to $\tilde u$ (in a corresponding scaled ball).
Therefore, it is enough to prove these results for $R=1$.

\begin{lemma}
\label{lem:iter}
Let $e\in S^1$ and $u$ be defined in $B_{2R}$ such that 
\[
u_h^\sigma(x)=R^\sigma\frac{u(x+he)-u(x)}{|h|^\sigma+\eps^\sigma},
\]
that is well defined for $x\in B_{R/2}$ and $|h|<R/2$, is bounded. 
If there exists $\tau$ with $\sigma+\tau \neq 1$ such that
\begin{equation}
\label{hip33}
|u_h^\sigma(x+es)-u_h^\sigma(x)|
\leq C {\|{u}\|_{L^{\infty}(B_{2R})}} \left(\frac{|s|^\tau}{R^\tau}+\frac{\eps^\tau}{R^\tau}\right)
\end{equation}
for every $x\in B_{R/8}$ and $|s|<R/8$, then
\begin{equation}
\label{con33}
|u(x+he)-u(x)|
\leq {C\|{u}\|_{L^{\infty}(B_{2R})}}\left(\frac{|h|^\xi}{R^\xi}+ \frac{\eps^{\sigma+\tau}}{R^{\sigma+\tau}}\right)
\end{equation}
for every $x\in B_{R/8}$ and $|h|<R/8$ where $\xi=\min\{1,\sigma+\tau\}$. 
\end{lemma}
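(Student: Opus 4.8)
The plan is to reduce, exactly as the discussion preceding the lemma permits, to the case $R=1$, and then to run a dyadic telescoping argument along the line through $x$ in the direction $e$. Writing $\psi(t):=u(x+te)-u(x)$, one has $\psi(0)=0$ and $u_h^\sigma(x+es)-u_h^\sigma(x)=\bigl(\psi(s+h)-\psi(s)-\psi(h)\bigr)/(|h|^\sigma+\eps^\sigma)$, so hypothesis \eqref{hip33} becomes the approximate sub-additivity bound
\[
\bigl|\psi(s+h)-\psi(s)-\psi(h)\bigr|\le C\|u\|_{L^\infty(B_2)}\,(|h|^\sigma+\eps^\sigma)(|s|^\tau+\eps^\tau),
\]
valid for $x\in B_{1/8}$ and $|s|,|h|$ below the admissible thresholds; taking $s=h$ gives in particular a bound for the second difference $\psi(2h)-2\psi(h)$.

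Next I would fix the target $h_0$ with $|h_0|<1/8$ and pick the integer $k\ge 0$ with $2^k|h_0|\in[1/16,1/8)$, so that $2^k|h_0|\simeq 1$ while the point $x+2^kh_0e$ still lies in the ball where \eqref{hip33} is available. Putting $E_j:=\psi(2^{j+1}h_0)-2\psi(2^jh_0)=u(x+2^{j+1}h_0e)-2u(x+2^jh_0e)+u(x)$, the displayed bound with $s=h=2^jh_0$ gives $|E_j|\le C\|u\|_{L^\infty(B_2)}(2^{j\sigma}|h_0|^\sigma+\eps^\sigma)(2^{j\tau}|h_0|^\tau+\eps^\tau)$ for $0\le j\le k-1$. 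Unwinding the identity $\psi(2^jh_0)=\tfrac12\psi(2^{j+1}h_0)-\tfrac12E_j$ yields
\[
u(x+h_0e)-u(x)=\psi(h_0)=2^{-k}\psi(2^kh_0)-\sum_{j=0}^{k-1}2^{-(j+1)}E_j .
\]

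The estimation then splits as follows. For the first term I would use the trivial bound $|\psi(2^kh_0)|\le 2\|u\|_{L^\infty(B_2)}$ together with $2^{-k}\simeq|h_0|\le|h_0|^\xi$ (since $\xi\le1$ and $|h_0|<1$). For the sum I would insert the bound on $E_j$, expand $(2^{j\sigma}|h_0|^\sigma+\eps^\sigma)(2^{j\tau}|h_0|^\tau+\eps^\tau)$ into its four monomials, and sum each resulting geometric series in $j$: the pure term $|h_0|^{\sigma+\tau}\sum_j2^{j(\sigma+\tau-1)}$ is, using $\sigma+\tau\ne1$, comparable to $|h_0|^{\sigma+\tau}$ when $\sigma+\tau<1$ and to $|h_0|^{\sigma+\tau}(2^k)^{\sigma+\tau-1}\simeq|h_0|$ when $\sigma+\tau>1$ (here $2^k|h_0|\simeq1$ is used), hence $\lesssim|h_0|^\xi$ in both cases; the two mixed terms $|h_0|^\sigma\eps^\tau$ and $\eps^\sigma|h_0|^\tau$ are bounded by Young's inequality with exponents $\tfrac{\sigma+\tau}{\sigma}$, $\tfrac{\sigma+\tau}{\tau}$ by $|h_0|^{\sigma+\tau}+\eps^{\sigma+\tau}\le|h_0|^\xi+\eps^{\sigma+\tau}$; and the last term $\eps^{\sigma+\tau}\sum_j2^{-j}\le2\eps^{\sigma+\tau}$. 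Collecting the pieces gives $|u(x+h_0e)-u(x)|\le C\|u\|_{L^\infty(B_2)}(|h_0|^\xi+\eps^{\sigma+\tau})$, which is \eqref{con33}; the case $|h_0|\in[1/16,1/8)$ is the degenerate instance $k=0$ and is covered by the trivial bound alone.

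The delicate point is the $\eps$-bookkeeping: the term $\eps^{\sigma+\tau}$ in \eqref{con33} is rigid and must be produced with exactly that exponent, so one has to check that none of the geometric summations degrade it and, above all, that Young's inequality converts the mixed scale--$\eps$ monomials precisely into $|h_0|^\xi+\eps^{\sigma+\tau}$. This is also where the assumption $\sigma+\tau\ne1$ is essential: it keeps the ratio $2^{\sigma+\tau-1}$ of the principal series bounded away from $1$, so that the series sums to a constant (depending on $\sigma,\tau$) times its largest term, with the gain or loss in powers of $2^k$ exactly matched by the normalization $2^k|h_0|\simeq1$; for $\sigma+\tau=1$ this constant would blow up. A secondary, purely mechanical nuisance is to verify throughout that the shifted points $x+2^jh_0e$, $0\le j\le k$, stay in $B_{1/4}\subset B_2$ and that the step sizes $2^jh_0$ remain below the thresholds ($1/8$ for \eqref{hip33}, $1/2$ for the definition of $u_h^\sigma$), which is precisely why $h_0$ is taken in $B_{1/8}$ and $k$ is chosen with $2^k|h_0|<1/8$.
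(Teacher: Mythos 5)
Your proof is correct and follows essentially the same route as the paper's: the same dyadic telescoping of second differences obtained by applying the hypothesis with $s=h$ at the scales $2^jh_0$, the same split between a geometric series in the pure $|h_0|$-powers (where $\sigma+\tau\neq 1$ enters) plus the accumulated $\eps^{\sigma+\tau}$ terms, and the same trivial $L^\infty$ bound for the top-scale term, the two tellings differing only by the direction of indexing. The one minor difference is that the paper absorbs the cross terms $|h|^\sigma\eps^\tau$ into $|h|^{\sigma+\tau}+\eps^{\sigma+\tau}$ \emph{before} summing, which sidesteps the implicit assumption $\sigma,\tau<1$ needed for your term-by-term geometric summation of the mixed monomials (harmless here, since that is the only regime in which the lemma is invoked).
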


\begin{proof}
By considering $\tilde u$ given by $\tilde u(x)=u(xR)$ and $\tilde \eps= \eps / R$ we can assume without loss of generality that $R=1$. 
We define
\[
w(r):=u(x+er)-u(x).
\]
By the definition of $u_{r/2}^\sigma$ and the assumption, we have
\[
\begin{split}
|w(r)-2w(r/2)|
&=|u(x+er)-2u(x+er/2)+u(x)|\\
&=(|r/2|^\sigma+\eps^\sigma)|u_{r/2}^\sigma(x+er/2)-u_{r/2}^\sigma(x)|\\
& \leq (|r/2|^\sigma+\eps^\sigma)C\|{u}\|_{L^{\infty}(B_{2R})}(|r/2|^\tau+\eps^\tau)\\
& \leq C4\|{u}\|_{L^{\infty}(B_{2R})}(|r/2|^{\sigma+\tau}+\eps^{\sigma+\tau})
\end{split}
\]
for every $r<1/4$ so that $r/2<1/8$. 
Next we use a standard iteration argument to get the statement in a neat form. Given $|h|<1/8$, we select $i\in \N$ such that $1/8\leq h2^i<1/4$ and we define $r_0=h2^i$.
Now, we apply the previous inequality for $r= r_0,r_0/2,\dots,r_0/2^{i-1}$ and we obtain
\[
\begin{split}
|w(r_0)-2w(r_0/2)|
&\leq  C4 \|{u}\|_{L^{\infty}(B_{2R})} (|r_0|^{\sigma+\tau}+\eps^{\sigma+\tau})\\
|2w(r_0/2)-2^2w(r_0/2^2)|
&\leq  2C4 \|{u}\|_{L^{\infty}(B_{2R})} (|r_0/2|^{\sigma+\tau}+\eps^{\sigma+\tau})\\
&\vdots\\
|2^{i-1}w(r_0/2^{i-1})-2^iw(r_0/2^{i})|
&\leq  2^{i-1}C4 \|{u}\|_{L^{\infty}(B_{2R})}(|r_0/2^{i-1}|^{\sigma+\tau}+\eps^{\sigma+\tau}).
\end{split}
\]
Adding these inequalities we get
\[
|w(r_0)-2^iw(r_0/2^i)|
\leq C4 {\|{u}\|_{L^{\infty}(B_{2R})}}\!\!\left(|r_0|^{\sigma+\tau}\sum_{j=0}^{i-1}2^{j(1-(\sigma+\tau))}
+2^i\eps^{\sigma+\tau}\right)\!\!.
\]
Then, recalling that $r_0/2^i=h$ and $1/8\leq h2^i$, we have
\[
\begin{split}
&|w(h)|
\leq 2^{-i}|w(r_0)|+2^{-i}C4 {\|{u}\|_{L^{\infty}(B_{2R})}}
\!\! \left(|r_0|^{\sigma+\tau}\sum_{j=0}^{i-1}2^{j(1-(\sigma+\tau))}
+2^i\eps^{\sigma+\tau}\right)\\
&\leq
8|h|2\|{u}\|_{L^{\infty}(B_{2R})}+C4 {\|{u}\|_{L^{\infty}(B_{2R})}}
\!\!\left(|r_0|^{\sigma+\tau}2^{-i}\sum_{j=0}^{i-1}2^{j(1-(\sigma+\tau))}
+\eps^{\sigma+\tau}\right)\!\! .
\end{split}
\]

If $\sigma+\tau<1$, we bound the sum by $\frac{2^{i(1-(\sigma+\tau))}}{2^{1-(\sigma+\tau)}-1}$ and we get
\[
\begin{split}
&|w(h)|
\\&\leq
16|h|\|{u}\|_{L^{\infty}(B_{2R})}+ \frac{C4{\|{u}\|_{L^{\infty}(B_{2R})}}}{2^{1-(\sigma+\tau)}-1} \left(|r_0|^{\sigma+\tau}2^{-i}2^{i(1-(\sigma+\tau))}
+\eps^{\sigma+\tau}\right)\\
&=
16|h|\|{u}\|_{L^{\infty}(B_{2R})}+\frac{C4}{2^{1-(\sigma+\tau)}-1} {\|{u}\|_{L^{\infty}(B_{2R})}}\left(|h|^{\sigma+\tau}+\eps^{\sigma+\tau}\right)
\end{split}
\]
and the result follows. 

If $\sigma+\tau>1$ we bound the sum by $\frac{1}{1-2^{1-(\sigma+\tau)}}$ and we get
\[
\begin{split}
|w(h)|
&\leq
16|h|\|{u}\|_{L^{\infty}(B_{2R})}+ \frac{C4}{1-2^{1-(\sigma+\tau)}} {\|{u}\|_{L^{\infty}(B_{2R})}}\left(|r_0|^{\sigma+\tau}2^{-i}
+\eps^{\sigma+\tau}\right)\\
&=
16|h|\|{u}\|_{L^{\infty}(B_{2R})}+\frac{C4}{1-2^{1-(\sigma+\tau)}} {\|{u}\|_{L^{\infty}(B_{2R})}}\left(|h||r_0|^{\sigma+\tau-1}+\eps^{\sigma+\tau}\right),
\end{split}
\]
we bound $r_0<1/4$ and the result follows. 
\end{proof}

Now, by iterating this result, we prove Theorem~\ref{Lipschitz}.

\begin{proof}[Proof of Theorem~\ref{Lipschitz}] 
By considering $\tilde u$ given by $\tilde u(x)=u(xR)$ and $\tilde \eps={\eps} /R$ we get $\L_{\tilde \eps} \tilde u=0$, therefore we can assume again without loss of generality that $R=1$.

Observe that the result follows from a bound for 
\[
u_h^1(x)=
\frac{u(x+eh)-u(x)}{|h|+\eps}
\]
for $x\in B_\theta$, $e\in S^1$ and $|h|<\theta$. 
We have shown in Lemma~\ref{lem:iter} that an asymptotic H\"older result for $u_h^\sigma$ allows us to get an improved H\"older result for $u$.
Then, we can consider $u_h^\sigma$ for a larger $\sigma$ and then repeat the idea to reach the exponent 1.

Recalling Theorem~\ref{HolderL+L-}, for $R=1/2$, we have 
\[
|u(x+he)-u(x)|
\leq C\|{u}\|_{L^\infty(B_{1})}(|h|^\gamma+\eps^{\gamma})
\]
for $|x|,|h|<1/4$.
Therefore,
\[
u_h^\gamma(x)=\frac{u(x+he)-u(x)}{|h|^\gamma+\eps^\gamma}
\]
is bounded by $C \|{u}\|_{L^\infty(B_1)}$.
As in (\ref{eq:quotient-super-sub}), we have $\L_\eps^- u_h^\gamma \leq 0\leq \L_\eps^+ u_h^\gamma$, and  by Theorem~\ref{HolderL+L-}, for $R=1/8$, we get
\[
|u_h^\gamma(x+he)-u_h^\gamma(x)|
\leq C \|{u}\|_{L^\infty(B_{1})} (|h|^\gamma+\eps^{\gamma})
\]
for $|x|,|h|<1/16$.
Therefore, using again Lemma~\ref{lem:iter}, we get
\[
\begin{split}
|u(x+he)-u(x)|
&\leq C \|{u}\|_{L^\infty(B_{1})} (|h|^\xi+ \eps^{2\gamma})
\end{split}
\]
for $|x|,|h|<1/16$, where $\xi=\min\{1,2\gamma\}$. 

If $2\gamma<1$, we now consider 
\[
u_h^{2\gamma}(x)=\frac{u(x+he)-u(x)}{|h|^{2\gamma}+\eps^{2\gamma}}.
\]
As before we have $\L_\eps^- u_h^{2\gamma} \leq 0\leq \L_\eps^+ u_h^{2\gamma}$ and therefore Theorem~\ref{HolderL+L-} applies. Hence, we get
\[
\begin{split}
|u(x+he)-u(x)|
&\leq C \|{u}\|_{L^{\infty}(B_1)}(|h|^\xi+ \eps^{3\gamma})
\end{split}
\]
for $|x|,|h|<1/4^3$. where $\xi=\min\{1,3\gamma\}$. 

Observe that Theorem~\ref{HolderL+L-} gives us a value of $\gamma$ but it holds for every smaller $\gamma$. 
So, we can consider $\gamma$ such that $k\gamma\neq 1$ for every $k\in\N$.
We iterate this procedure until we reach $k\gamma>1$, and get
\[
\begin{split}
|u(x+he)-u(x)|
&\leq C \|{u}\|_{L^{\infty}(B_1)}(|h|+ \eps^{k\gamma})
\end{split}
\]
for $|x|, |h|<1/4^k$. 
We can take $\eps_0<1$ so that $\eps^{k\gamma}<\eps$ and the result follows for $\theta=\frac{1}{4^k2}$.
\end{proof}

To obtain the desired $C^{1,\gamma}$ type asymptotic estimates,  we utilize the difference quotient
\[
u_h^1(x)=\frac{u(x+he)-u(x)}{|h|+\eps},
\]
and use  Theorem \ref{HolderL+L-} once more along with the previous theorem. 
\begin{theorem}
\label{teo.C1a}
There exist $\eps_0, \kappa>0$ and $C>0$ such that if $u$ is a bounded Borel function such that $\L_\eps u=0$ in $B_{R}$ with $\eps<\eps_0$, it holds that
\begin{equation}
\label{con34a}
|u_h^1(x)-u_h^1(y)|\leq \frac{C}{R^{\gamma}}C\|{u}\|_{L^{\infty}(B_1)} \Big(|x-y|^\gamma+\eps^\gamma\Big) 
\end{equation}
for $|h|<\kappa R$, 
and
\begin{equation}
\label{con34b}
\left|u(x)-2u\left(\frac{x+y}{2}\right)+u(y)\right|\leq \frac{C}{R^{1+\gamma}}\|{u}\|_{L^{\infty}(B_{R})}\Big(|x-y|^{1+\gamma}+\eps^{1+\gamma}\Big)
\end{equation}
for every $x,y\in B_{\kappa R}$.
\end{theorem}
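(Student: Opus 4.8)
The plan is to bootstrap from Theorem~\ref{Lipschitz} using exactly the same machinery as in the proof of that theorem, but with one extra turn of the crank. As usual we scale so that $R=1$. By Theorem~\ref{Lipschitz} applied on a slightly smaller ball we already know that $u_h^1$ is bounded on a ball (with a bound $C\|u\|_{L^\infty(B_1)}$ independent of $h$, uniformly for $|h|$ small) since $|u(x+he)-u(x)|\le C\|u\|_{L^\infty}(|h|+\eps)$. Thus $u_h^1$ is a bounded Borel function, and by the computation \eqref{eq:quotient-super-sub} (using \ref{h1}, \ref{h2} and $1$-homogeneity of $\L_\eps^\pm$, together with the scaling \eqref{scaleL+}) it satisfies $\L_\eps^- u_h^1\le 0\le \L_\eps^+ u_h^1$. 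Hence Theorem~\ref{HolderL+L-} applies to $u_h^1$ and yields
\[
|u_h^1(x)-u_h^1(y)|\le C\|u\|_{L^\infty(B_1)}\big(|x-y|^\gamma+\eps^\gamma\big)
\]
on a suitable ball $B_{\kappa}$, which (after undoing the scaling) is precisely \eqref{con34a}.

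For \eqref{con34b} I would feed the estimate \eqref{con34a} into Lemma~\ref{lem:iter} with $\sigma=1$ and $\tau=\gamma$. Choosing $\gamma$ (which we are free to shrink, by the remark after Theorem~\ref{Lipschitz}) so that $1+\gamma\ne 1$, i.e.\ any positive $\gamma$, the hypothesis \eqref{hip33} of the lemma holds for $u_h^1$, and the conclusion \eqref{con33} gives
\[
|u(x+he)-u(x)|\le C\|u\|_{L^\infty(B_1)}\big(|h|^{\xi}+\eps^{1+\gamma}\big),\qquad \xi=\min\{1,1+\gamma\}=1.
\]
Wait — this only re-derives the Lipschitz bound, so instead the right move is to run the argument at the level of the \emph{second} difference: set $w(r)=u(x+er)-u(x)$ as in the proof of Lemma~\ref{lem:iter}, and use \eqref{con34a} directly to estimate
\[
|w(r)-2w(r/2)|=(|r/2|+\eps)\,|u_{r/2}^1(x+er/2)-u_{r/2}^1(x)|\le C\|u\|_{L^\infty(B_1)}(|r|+\eps)(|r|^\gamma+\eps^\gamma),
\]
which after expanding is bounded by $C\|u\|_{L^\infty(B_1)}(|r|^{1+\gamma}+\eps^{1+\gamma})$. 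Then the geometric telescoping sum over $r=r_0,r_0/2,\dots$ with $r_0\sim h$ — exactly as in Lemma~\ref{lem:iter}, now with exponent $1+\gamma>1$, so the series $\sum 2^{j(1-(1+\gamma))}=\sum 2^{-j\gamma}$ converges — produces
\[
\Big|u(x)-2u\Big(\tfrac{x+y}{2}\Big)+u(y)\Big|\le C\|u\|_{L^\infty(B_1)}\big(|x-y|^{1+\gamma}+\eps^{1+\gamma}\big)
\]
for $x,y$ on a segment; the general two-point statement \eqref{con34b} follows by taking $e=(y-x)/|y-x|$ and $h=|y-x|$. Undoing the $R$-scaling restores the stated form with $R^{1+\gamma}$ in the denominator.

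The main obstacle is bookkeeping rather than conceptual: one must verify that after each application of Theorem~\ref{HolderL+L-} the ball on which the new estimate holds is still large enough (comparable to a fixed fraction of $B_R$) and that the constant $C$ and the radius $\kappa$ can be chosen uniformly in $h$ for all $|h|<\kappa R$ — this is where the uniform-in-$h$ boundedness of $u_h^1$ coming from Theorem~\ref{Lipschitz} (as opposed to the merely H\"older-level bound \eqref{uh<u:bound} used for Theorem~\ref{Lipschitz} itself) is essential, and it is exactly what lets us reach the endpoint exponent in the \emph{first} difference and hence genuine $C^{1,\gamma}$ control of the \emph{second} difference. A secondary point is to make sure $\eps_0$ is chosen small enough (and $\eps^{k\gamma}\le\eps^{1+\gamma}$-type comparisons are valid, cf.\ the end of the proof of Theorem~\ref{Lipschitz}) so that all the error terms collapse into the single asymptotic term $\eps^{1+\gamma}$ on the right-hand side. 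None of these steps requires a new idea beyond those already used for Lemma~\ref{lem:iter} and Theorem~\ref{Lipschitz}; it is one more iteration of the same scheme.
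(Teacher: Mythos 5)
Your proposal is correct and follows essentially the same route as the paper: boundedness of $u_h^1$ from Theorem~\ref{Lipschitz}, the extremal inequalities \eqref{eq:quotient-super-sub}, Theorem~\ref{HolderL+L-} applied to $u_h^1$ for \eqref{con34a}, and then the identity $u(x+2he)-2u(x+he)+u(x)=(|h|+\eps)\bigl(u_h^1(x+he)-u_h^1(x)\bigr)$ for \eqref{con34b}. Note only that your final telescoping sum is superfluous (and would anyway yield a first-difference, not a second-difference, bound): the single-step estimate $|w(r)-2w(r/2)|\le C\|u\|_{L^\infty(B_1)}(|r|^{1+\gamma}+\eps^{1+\gamma})$ with $r=|x-y|$ is already exactly \eqref{con34b}, which is how the paper concludes.
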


\begin{proof} 
Again we can consider $\tilde u$ given by $\tilde u(x)=u(xR)$ and $\tilde \eps={\eps} /R$ to assume without loss of generality that $R=1$.

For $u_h^1(x)$, we obtain from Theorem~\ref{Lipschitz} that it is bounded in $B_\theta$ by $C\|{u}\|_{L^{\infty}(B_1)}$, and satisfies extremal inequalities similar as the ones in (\ref{eq:quotient-super-sub}). Hence,
we can apply Theorem \ref{HolderL+L-} to $u_h^1$, and get
\begin{align}
\label{eq:C1a-form2}
|u_h^1(z)-u_h^1(x)|\leq C\|{u}\|_{L^{\infty}(B_1)} \Big(|z-x|^\gamma+\eps^\gamma\Big) \end{align}
in $B_{\theta/4}$, proving the first inequality.

Next we prove the second inequality. Let $e$ be a unit vector, $h\in\R$ such that $y-x=2eh$, 
so that the claim reads as 
\[
|u(x+2he)-2u(x+he)+u(x)|\leq C\|{u}\|_{L^{\infty}(B_1)}\Big(|h|^{1+\gamma}+\eps^{1+\gamma}\Big).
\]
But now this follows from (\ref{eq:C1a-form2}), selecting $z$ as $x+he$.
\end{proof}

Later in Section \ref{sec:convergence}, we will show that the estimate in addition to being interesting on its own right, is sufficient in order to pass to a limit as $\eps \to 0$. Thus, we obtain as an application a $C^{1,\gamma}$-regularity for the limit function (that turns out to be a solution to a PDE).

\subsection{Dynamic programming principle with $x$ dependent RHS}

In this section we present a version with an $x$-dependent right hand side $f$. In terms of stochastic processes, $f$ represents a running payoff that can change according to the spatial location.

We consider solutions to the equation 
$$
\L_\eps u=f,
$$ 
where $f$ is a bounded Lipschitz function. As in the previous section, the idea is  to consider the difference quotient, to show that it is bounded and that it satisfies suitable extremal inequalities. Then, we can use Theorem~\ref{HolderL+L-} on the quotient, and obtain the Lipschitz estimate (Theorem~\ref{Lipschitz.f} below), which is a counterpart to Theorem~\ref{Lipschitz}. 

To be more precise, as before we use
\[
u_h^\gamma(x)=R^\gamma\frac{u(x+eh)-u(x)}{|h|^\gamma+\eps^\gamma},
\]
and our aim is to show that this quantity is bounded.
Observe that since  
\[
\L_\eps^+ u\geq -\|{f}\|_{L^{\infty}}
\quad
\mbox{ and }
\quad
\L_\eps^- u\leq \|{f}\|_{L^{\infty}},
\]
 it follows by Theorem~\ref{HolderL+L-} that 
\begin{align}
\label{eq:Holder_f}
\abs{u(x+eh)-u(x)}\le \frac{C}{R^\gamma}\left(\|{u}\|_{L^\infty(B_{2R})}+R^2
\|{f}\|_{L^\infty(B_{2R})}\right) \big(|h|^{\gamma}+\eps^{\gamma}\big)
\end{align}
for $x\in B_{R/2}$ and $|h|<R/2$.
Thus
\[
\|{u_h^\gamma}\|_{L^\infty(B_{R/2})}\le C\left(\|{u}\|_{L^\infty(B_{2R})}+R^2
\|{f}\|_{L^\infty(B_{2R})}\right),
\]
i.e.\ we obtain the desired boundedness.

Next we show that $u_h^\gamma$ satisfies extremal inequalities. It  holds that 
$$\L_\eps^+(u(x))\ge  f(x),  \quad \mbox{ and } \quad \L_\eps^-(u(x))\le f(x),$$ and thus we have
\begin{align}
\label{eq:extremal}
\L_\eps^+(u_h^\gamma)(x)
&=
\frac{R^\gamma}{|h|^\gamma+\eps^\gamma} \L_\eps^+(u(x+eh)-u(x))\nonumber\\
&\geq
\frac{R^\gamma}{|h|^\gamma+\eps^\gamma} (\L_\eps(u(x+eh))-\L_\eps(u(x)))\nonumber\\
&\geq 
\frac{R^\gamma}{|h|^\gamma+\eps^\gamma} (f(x+eh)-f(x))\nonumber\\
&\geq 
-\frac{R^\gamma}{|h|^\gamma+\eps^\gamma} |h| \Lip(f)\nonumber\\
&\geq 
-R \Lip(f).
\end{align}
Analogously  $\L_\eps^-(u_h^\gamma)\leq R \Lip(f)$.

Recall that if we consider $\tilde u$ given by $\tilde u(x)=u(xR)$ and $\tilde \eps=\eps / R$, then $\L^+_{\tilde \eps} \tilde u(x)=R^2\L^+_\eps u(Rx)$.
If we define $\tilde f(x)=R^2 f(xR)$ we have 
\begin{equation}
\label{scale}
\Lip(\tilde f)=R^3\Lip f
\quad \text{and} \quad 
\| {\tilde f} \|_{L^\infty(B_{2})}=R^2\|{f}\|_{L^\infty(B_{2R})}.
\end{equation}
So, $\L_\eps^+ u\geq -\|{f}\|_{L^{\infty}}$ is equivalent to $\L_{\tilde\eps}^+ \tilde u\geq -\|{\tilde f}\|_{L^{\infty}}$.
Also recall that for $\tilde h=h / R$ we have $\L_{\tilde\eps}^+(\tilde u_{\tilde h}^\gamma)=R^2\L_\eps^+(u_h^\gamma)$, where $\tilde u_{\tilde h}^\gamma$ was defined in (\ref{eq:difference-quotient-withoutR}) and $u_h^\gamma(x)$ above at the beginning of this section.
Therefore $\L_\eps^+(u_h^\gamma) \geq -R \Lip(f)$ is equivalent to $\L_{\tilde\eps}^+(\tilde u_{\tilde h}^\gamma) \geq -\Lip(\tilde f)$.

These are the inequalities that we will need in Lemma~\ref{lem:iter.f}, Theorem~\ref{Lipschitz.f} and Theorem~\ref{teo.C1a.f} in order to apply Theorem~\ref{HolderL+L-}.
Also \eqref{hip35}, \eqref{con35}, \eqref{con36}, \eqref{con37a} and \eqref{con37b} scale well with $R$ according to \eqref{scale}.
So, both the hypothesis and conclusions of just mentioned results are equivalent when applying them to $u$ or to $\tilde u$ (in a corresponding scaled ball).
Therefore, we can argue as before proving these results for a fixed radius $R=1$.

Next, we state a counterpart to Lemma~\ref{lem:iter}. Its proof follows from exactly the same calculations as in Lemma~\ref{lem:iter} since we have boundedness and suitable extremal inequalities at our disposal.

\begin{lemma}
\label{lem:iter.f}
Let $e\in S^1$ and $u$ be defined in $B_{2R}$ such that 
\[
u_h^\sigma(x)=R^\sigma\frac{u(x+he)-u(x)}{|h|^\sigma+\eps^\sigma},
\]
that is well defined for $x\in B_{R/2}$ and $|h|<R/2$, is bounded.
If there exists $\tau$ with $\sigma+\tau \neq 1$ such that
\begin{equation}
\label{hip35}
\begin{array}{l}
\displaystyle 
|u_h^\sigma(x+es)-u_h^\sigma(x)| \\[10pt]
\quad \displaystyle \leq {C\left(\|{u}\|_{L^\infty(B_{2R})}+R^2\|{f}\|_{L^\infty(B_{2R})}+R^3\Lip f\right) }\left(\frac{|s|^\tau}{R^\tau}+\frac{\eps^\tau}{R^\tau}\right)
\end{array}
\end{equation}
for every $x\in B_{R/8}$ and $|s|<R/8$, then
\begin{equation}
\label{con35}
|u(x+he)-u(x)|
\leq C(u,f)\left(\frac{|h|^\xi}{R^\xi}+ \frac{\eps^{\sigma+\tau}}{R^{\sigma+\tau}}\right)
\end{equation}
for every $x\in B_{R/8}$ and $|h|<R/8$ where $\xi=\min\{1,\sigma+\tau\}$. 
Here   
$$C(u,f)=C\left(\|{u}\|_{L^{\infty}(B_{2R})} +R^2\|{f}\|_{L^\infty(B_{2R})}  + R^3\Lip f\right).$$ 
\end{lemma}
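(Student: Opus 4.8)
The plan is to transcribe the proof of Lemma~\ref{lem:iter} essentially verbatim, the only change being that the running constant $C\|u\|_{L^{\infty}(B_{2R})}$ is replaced everywhere by $C(u,f)=C(\|u\|_{L^{\infty}(B_{2R})}+R^2\|f\|_{L^\infty(B_{2R})}+R^3\Lip f)$. First I would reduce to $R=1$: passing to $\tilde u(x)=u(xR)$, $\tilde\eps=\eps/R$ and $\tilde f(x)=R^2 f(xR)$, the scaling relations \eqref{scale} show that the three terms of $C(u,f)$ become exactly $\|\tilde u\|_{L^{\infty}(B_{2})}$, $\|\tilde f\|_{L^\infty(B_{2})}$ and $\Lip\tilde f$, so after normalization the hypothesis \eqref{hip35} reads $|u_h^\sigma(x+es)-u_h^\sigma(x)|\le C_0(|s|^\tau+\eps^\tau)$ with $C_0:=C(\|u\|_{L^{\infty}(B_{2})}+\|f\|_{L^\infty(B_{2})}+\Lip f)$, and \eqref{con35} becomes the assertion $|u(x+he)-u(x)|\le C_0(|h|^\xi+\eps^{\sigma+\tau})$ for $x\in B_{1/8}$, $|h|<1/8$.

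Next, exactly as in Lemma~\ref{lem:iter}, I would set $w(r):=u(x+er)-u(x)$ and use the definition of $u_{r/2}^\sigma$ together with the normalized hypothesis to get $|w(r)-2w(r/2)|=(|r/2|^\sigma+\eps^\sigma)\,|u_{r/2}^\sigma(x+er/2)-u_{r/2}^\sigma(x)|\le 4C_0(|r/2|^{\sigma+\tau}+\eps^{\sigma+\tau})$ for $r<1/4$. Then I would run the same dyadic iteration: for $|h|<1/8$ choose $i\in\N$ with $1/8\le h2^i<1/4$, set $r_0=h2^i$, apply the previous bound to $r=r_0,r_0/2,\dots,r_0/2^{i-1}$, multiply the $j$-th inequality by $2^j$, and add them up to obtain $|w(r_0)-2^i w(h)|\le 4C_0(|r_0|^{\sigma+\tau}\sum_{j=0}^{i-1}2^{j(1-(\sigma+\tau))}+2^i\eps^{\sigma+\tau})$. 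Dividing by $2^i$, using the trivial bound $|w(r_0)|\le 2\|u\|_{L^{\infty}(B_2)}\le 2C_0$ together with $2^{-i}\le 8|h|$, I reach $|w(h)|\le 16|h|C_0+4C_0(|r_0|^{\sigma+\tau}2^{-i}\sum_{j=0}^{i-1}2^{j(1-(\sigma+\tau))}+\eps^{\sigma+\tau})$.

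To finish I would split on the sign of $\sigma+\tau-1$, just as in Lemma~\ref{lem:iter}: when $\sigma+\tau<1$ the geometric sum is at most $2^{i(1-(\sigma+\tau))}/(2^{1-(\sigma+\tau)}-1)$, which after multiplication by $|r_0|^{\sigma+\tau}2^{-i}$ and the identity $r_0/2^i=h$ produces a term bounded by a constant times $|h|^{\sigma+\tau}$, giving \eqref{con35} with $\xi=\sigma+\tau$; when $\sigma+\tau>1$ the sum is at most $(1-2^{1-(\sigma+\tau)})^{-1}$, and writing $|r_0|^{\sigma+\tau}2^{-i}=|h|\,|r_0|^{\sigma+\tau-1}\le|h|(1/4)^{\sigma+\tau-1}$ gives \eqref{con35} with $\xi=1$. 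I do not expect any genuine obstacle: the argument uses only that $u_h^\sigma$ is bounded, the hypothesis \eqref{hip35}, and the estimate $|w(r_0)|\le 2\|u\|_{L^{\infty}(B_2)}$, all of which now hold with $C_0$ in place of $C\|u\|_{L^{\infty}(B_2)}$. The one point meriting a line of care is to check, via \eqref{scale}, that $C(u,f)$ scales correctly so that the reduction to $R=1$ is legitimate — which is precisely what was recorded in the discussion preceding the lemma.
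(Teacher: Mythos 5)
Your proposal is correct and is exactly the paper's approach: the paper itself proves Lemma~\ref{lem:iter.f} by simply remarking that "its proof follows from exactly the same calculations as in Lemma~\ref{lem:iter}," which is precisely the verbatim transcription (with $C\|u\|_{L^{\infty}(B_{2R})}$ replaced by $C(u,f)$, and the scaling reduction justified by \eqref{scale}) that you carry out. All the steps you reproduce — the telescoping bound on $|w(r)-2w(r/2)|$, the dyadic iteration, and the case split on $\sigma+\tau\gtrless 1$ — go through unchanged since the argument only uses the boundedness of $u$ and the hypothesis \eqref{hip35}.
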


We are ready to state and prove the counterpart of Theorem~\ref{Lipschitz}.

\begin{theorem}
\label{Lipschitz.f}
There exist $\eps_0,\theta,C>0$ such that if a bounded Borel function $u$ satisfies $\L_\eps u=f$ in $B_R$ with $\eps<R\eps_0$, then 
\begin{equation}
\label{con36}
|u(x)-u(z)|\leq \frac{C}{R}\left( \|{u}\|_{L^{\infty}(B_R)}+R^2\|{f}\|_{L^\infty(B_{R})}+R^3\Lip f\right)\big(|x-z|+\eps\big)
\end{equation}
for every $x, z\in B_{\theta R}$.
\end{theorem}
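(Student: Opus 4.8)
The plan is to run the argument of Theorem~\ref{Lipschitz} verbatim, now carrying the running cost $f$ along and invoking Lemma~\ref{lem:iter.f} in place of Lemma~\ref{lem:iter}. As noted after \eqref{scale}, both the hypotheses and the conclusion scale correctly (with the three terms $\|u\|_{L^\infty}$, $R^2\|f\|_{L^\infty}$, $R^3\Lip f$), so I would first reduce to $R=1$ by passing to $\tilde u(x)=u(xR)$, $\tilde\eps=\eps/R$, $\tilde f(x)=R^2 f(xR)$. As in Theorem~\ref{Lipschitz}, it then suffices to produce a uniform bound for $u_h^1(x)=(u(x+eh)-u(x))/(|h|+\eps)$ on a fixed small ball, for all small $h$ and all unit vectors $e$.

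For the first step I would use \eqref{eq:Holder_f} to get that $u_h^\gamma$ is bounded on a smaller ball by $C(\|u\|_{L^\infty(B_1)}+\|f\|_{L^\infty(B_1)})$, where $\gamma$ is the H\"older exponent of Theorem~\ref{HolderL+L-}, chosen (as in the proof of Theorem~\ref{Lipschitz}, which is legitimate since that theorem holds for all smaller exponents) so that $k\gamma\ne1$ for every $k\in\N$. By the computation \eqref{eq:extremal} the quotient also satisfies $\L_\eps^- u_h^\gamma\le\Lip(f)$ and $\L_\eps^+ u_h^\gamma\ge-\Lip(f)$, so Theorem~\ref{HolderL+L-} applies to $u_h^\gamma$ with $\rho=\Lip(f)$ on a still smaller ball, giving
\[
|u_h^\gamma(x+es)-u_h^\gamma(x)|\le C\bigl(\|u\|_{L^\infty(B_1)}+\|f\|_{L^\infty(B_1)}+\Lip(f)\bigr)\bigl(|s|^\gamma+\eps^\gamma\bigr),
\]
which is precisely hypothesis \eqref{hip35} of Lemma~\ref{lem:iter.f} with $\sigma=\tau=\gamma$. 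Applying that lemma upgrades the increment of $u$ to $|u(x+he)-u(x)|\le C(u,f)(|h|^\xi+\eps^{2\gamma})$ with $\xi=\min\{1,2\gamma\}$.

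If $2\gamma<1$, I would iterate: since the extremal inequalities in \eqref{eq:extremal} do not depend on the exponent, the quotient $u_h^{2\gamma}$ is again bounded (by the previous step) and still satisfies $\L_\eps^- u_h^{2\gamma}\le\Lip(f)\le 0\vee\L_\eps^+ u_h^{2\gamma}+\Lip(f)$, so Theorem~\ref{HolderL+L-} followed by Lemma~\ref{lem:iter.f} with $\sigma=2\gamma,\ \tau=\gamma$ gives the increment bound with exponent $\min\{1,3\gamma\}$ and error $\eps^{3\gamma}$. After finitely many steps $k$ (with $k$ depending only on $\gamma$, hence only on $\Lambda,\alpha,\beta,n$) one reaches $k\gamma>1$ and obtains
\[
|u(x+he)-u(x)|\le C(u,f)\bigl(|h|+\eps^{k\gamma}\bigr)
\]
on a ball of radius of order $4^{-k}$, where $C(u,f)=C(\|u\|_{L^\infty(B_1)}+\|f\|_{L^\infty(B_1)}+\Lip f)$. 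Taking $\eps_0<1$ forces $\eps^{k\gamma}\le\eps$, and undoing the scaling yields \eqref{con36} with $\theta=1/(2\cdot4^{k})$.

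I do not expect a genuinely new difficulty here: the only thing that needs care is the bookkeeping of the constant, namely checking that it is consistently of the claimed shape $C(u,f)=C(\|u\|_{L^\infty(B_{2R})}+R^2\|f\|_{L^\infty(B_{2R})}+R^3\Lip f)$ through the iteration. The $\|f\|_{L^\infty}$ contribution enters only once, through the boundedness of the first quotient via \eqref{eq:Holder_f}; the $\Lip f$ contribution enters through the term $R^2\rho$ in Theorem~\ref{HolderL+L-} applied to the quotients, with $\rho=R\Lip f$, producing the $R^3\Lip f$ factor; and the scalings recorded in \eqref{scale} guarantee these persist correctly at every stage. Once this is verified, the argument is literally that of Theorem~\ref{Lipschitz}.
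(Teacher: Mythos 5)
Your proposal is correct and follows essentially the same route as the paper: reduce to $R=1$ by the scaling recorded in \eqref{scale}, bound $u_h^\gamma$ via \eqref{eq:Holder_f}, use \eqref{eq:extremal} to get the extremal inequalities with $\rho=\Lip(f)$ so that Theorem~\ref{HolderL+L-} applies to the quotient, feed the resulting H\"older estimate into Lemma~\ref{lem:iter.f}, and iterate with $\gamma$ chosen so that $k\gamma\ne 1$ until $k\gamma>1$, finally absorbing $\eps^{k\gamma}$ into $\eps$ for $\eps_0<1$. The bookkeeping of the constant $C(u,f)$ that you flag is exactly the only point the paper also has to track, and your accounting of it matches theirs.
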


\begin{proof} Again we assume that $R=1$. 
As pointed out in (\ref{eq:Holder_f}), similarly as in the proof of  Theorem~\ref{Lipschitz}, we get  by Theorem~\ref{HolderL+L-}, for $R=1/2$, that 
\[
|u(x+he)-u(x)|
\leq C\left(\|{u}\|_{L^\infty(B_{1})}+\frac{1}{4}\|{f}\|_{L^\infty(B_{1})}\right)(|h|^\gamma+\eps^{\gamma})
\]
for $\abs{x},|h|<1/4$.
Therefore 
\[
u_h^\gamma(x)=\frac{u(x+he)-u(x)}{|h|^\gamma+\eps^\gamma}
\]
is bounded by $C\left(\|{u}\|_{L^\infty(B_{1})}+\frac{1}{4}\|{f}\|_{L^\infty(B_{1})}\right)$.
Then, by Theorem~\ref{HolderL+L-}, for $R=1/8$, we get
\[
|u_h^\gamma(x+he)-u_h^\gamma(x)|
\leq C\left( {\|{u}\|_{L^\infty(B_{1})}+\|{f}\|_{L^\infty(B_{1})}}+\Lip f\right)(|h|^\gamma+\eps^{\gamma})
\]
for $|x|,|h|<1/16$. 
Therefore, by Lemma~\ref{lem:iter.f} for $R=1/2$ we get
\[
\begin{split}
|u(x+he)-u(x)|
&\leq C\left( \|{u}\|_{L^{\infty}(B_1)}+\|{f}\|_{L^\infty(B_{1})}+\Lip f\right)(|h|^\xi+ \eps^{2\gamma})
\end{split}
\]
for $|x|,|h|<1/16$, where $\xi=\min\{1,2\gamma\}$. 

As before if $2\gamma<1$ we iterate this procedure until we reach the desired exponent 1.
\end{proof}

Also the counterpart for Theorem~\ref{teo.C1a}, i.e.\ the $C^{1,\gamma}$-estimates, holds. For the formulation, recall that
\[
u_h^1(x)=\frac{u(x+he)-u(x)}{|h|+\eps}.
\]
\begin{theorem}
\label{teo.C1a.f}
There exist $\eps_0, \kappa>0$ and $C>0$ such that if $u$ is a bounded Borel function such that $\L_\eps u=f$ in $B_{R}$ with $\eps<\eps_0$, it holds that
\begin{equation}
\label{con37a}
|u_h^1(x)-u_h^1(y)|\leq \frac{C}{R^{\gamma}}\|{u}\|_{L^{\infty}(B_1)} \Big(|x-y|^\gamma+\eps^\gamma\Big) 
\end{equation}
for $|h|<\kappa R$, 
and
\begin{equation}
\label{con37b}
\left|u(x)-2u\left(\frac{x+y}{2}\right)+u(y)\right|\leq \frac{C}{R^{1+\gamma}}\|{u}\|_{L^{\infty}(B_{R})}\Big(|x-y|^{1+\gamma}+\eps^{1+\gamma}\Big)
\end{equation}
where 
$$C=C\Big(\|{u}\|_{L^{\infty}(B_1)}+R^2\|{f}\|_{L^\infty(B_{R})}  + R^3\Lip f\Big)$$ 
for every $x,y\in B_{\kappa R}$.
\end{theorem}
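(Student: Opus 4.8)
The plan is to mimic the proof of Theorem~\ref{teo.C1a} line by line, replacing Theorem~\ref{Lipschitz} by its inhomogeneous counterpart Theorem~\ref{Lipschitz.f} and Theorem~\ref{HolderL+L-} applied with a nonzero right-hand side $\rho$. First I would reduce to $R=1$: taking $\tilde u(x)=u(Rx)$, $\tilde\eps=\eps/R$ and $\tilde f(x)=R^2 f(Rx)$, one has $\L_{\tilde\eps}\tilde u=\tilde f$, and by \eqref{scale} the quantities $\|\tilde u\|_{L^\infty}$, $\|\tilde f\|_{L^\infty}$, $R^3\Lip f$ and the claimed estimates \eqref{con37a}, \eqref{con37b} all transform consistently, so it suffices to prove the statement for $R=1$. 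From here on write $C(u,f)=C\big(\|u\|_{L^\infty(B_1)}+\|f\|_{L^\infty(B_1)}+\Lip f\big)$.

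Next I would record the two facts about the difference quotient $u_h^1$ that are needed to feed into Theorem~\ref{HolderL+L-}. Boundedness: by Theorem~\ref{Lipschitz.f} (with $R=1$) we get $|u(x+he)-u(x)|\le C(u,f)(|h|+\eps)$ for $x,h$ in a fixed small ball $B_\theta$, hence $\|u_h^1\|_{L^\infty(B_\theta)}\le C(u,f)$. Extremal inequalities: exactly as in the computation \eqref{eq:extremal}, from $\L_\eps^+ u\ge f$ and $\L_\eps^- u\le f$ together with uniform ellipticity \ref{h1}, translation invariance \ref{h2} and $1$-homogeneity of $\L_\eps^\pm$, one obtains
\[
\L_\eps^+ u_h^1\ge -\Lip f,\qquad \L_\eps^- u_h^1\le \Lip f
\]
for $x$ in a slightly smaller ball. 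Thus $u_h^1$ satisfies the hypotheses of Theorem~\ref{HolderL+L-} with $\rho=\Lip f$, on a ball of fixed radius independent of $\eps$ and $h$.

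Then I would apply Theorem~\ref{HolderL+L-} to $u_h^1$: for some fixed radius $\theta'\le\theta$ and all $|h|<\kappa$ (with $\kappa$ chosen so that $x\mapsto u_h^1(x)$ is defined on the relevant ball),
\[
|u_h^1(x)-u_h^1(y)|\le C\big(\|u_h^1\|_{L^\infty}+\Lip f\big)\big(|x-y|^\gamma+\eps^\gamma\big)\le C(u,f)\big(|x-y|^\gamma+\eps^\gamma\big),
\]
which is \eqref{con37a} (after undoing the scaling). For the second estimate, given $x,y\in B_{\kappa}$ I write $y-x=2he$ with $e$ a unit vector and pick $z=x+he$; then $u(x)-2u\big(\tfrac{x+y}{2}\big)+u(y)=\big(u_h^1(z)-u_h^1(x)\big)(|h|+\eps)$ up to the obvious algebra, so \eqref{con37a} gives
\[
\Big|u(x)-2u\big(\tfrac{x+y}{2}\big)+u(y)\Big|\le C(u,f)\,(|h|+\eps)\big(|h|^\gamma+\eps^\gamma\big)\le C(u,f)\big(|x-y|^{1+\gamma}+\eps^{1+\gamma}\big),
\]
using $|h|=\tfrac12|x-y|$ and absorbing constants. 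Reinstating $R$ via the scaling relations yields \eqref{con37b}.

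The only genuinely delicate point is bookkeeping rather than a conceptual obstacle: one must track how the $\Lip f$ term enters each application of Theorem~\ref{HolderL+L-} (it appears with a factor $R^2$ in the $\rho R^2$ slot, which after the $R=1$ reduction is exactly the $R^3\Lip f$ term in $C(u,f)$ before scaling), and one must make sure the final product $(|h|+\eps)(|h|^\gamma+\eps^\gamma)$ is correctly dominated by $|x-y|^{1+\gamma}+\eps^{1+\gamma}$ up to a dimensional constant. All of this is routine given Theorem~\ref{Lipschitz.f} and Theorem~\ref{HolderL+L-}, so no new idea beyond the scheme of Theorem~\ref{teo.C1a} is required.
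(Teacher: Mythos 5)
Your proposal is correct and follows essentially the same route as the paper's own proof: reduction to $R=1$ by scaling, boundedness of $u_h^1$ from Theorem~\ref{Lipschitz.f}, the extremal inequalities $\L_\eps^+ u_h^1\ge -\Lip f$ and $\L_\eps^- u_h^1\le \Lip f$ via the computation in \eqref{eq:extremal}, an application of Theorem~\ref{HolderL+L-} to $u_h^1$, and the rewriting of the resulting H\"older bound as the second-order difference estimate. The bookkeeping you flag (the $\Lip f$ entering through the $R^2\rho$ slot and the bound $(|h|+\eps)(|h|^\gamma+\eps^\gamma)\le 2(|h|^{1+\gamma}+\eps^{1+\gamma})$) is handled the same way in the paper.
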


\begin{proof}
As before, assuming that $R=1$, the idea is to observe that $u_h^1(x)$
is bounded by $C (\|{u}\|_{L^{\infty}}+\|{f}\|_{L^\infty})$ using Theorem~\ref{Lipschitz.f}.
Then, a similar estimate as in (\ref{eq:extremal}) applied to $u_h^1$ gives  that 
$$\L_\eps^-(u_h^1)\leq  \Lip(f) \quad \mbox{ and }\quad \L_\eps^+(u_h^1)\ge  -\Lip(f).$$ 
Thus, we can apply Theorem \ref{HolderL+L-} to get
\[
|u_h^1(x)-u_h^1(z)|\leq  C\left(\|{u}\|_{L^{\infty}(B_1)}+\|{f}\|_{L^{\infty}(B_1)}+\Lip(f)\right)\Big(|x-z|^\gamma+\eps^\gamma\Big).
\]
This can be rewritten in terms of $u$ as
\[
\begin{array}{l}
\displaystyle 
|u(x+2he)-2u(x+he)+u(x)| \\[10pt]
\qquad \displaystyle \leq C\left(\|{u}\|_{L^{\infty}(B_1)}+\|{f}\|_{L^{\infty}(B_1)}+\Lip(f)\right)\Big(h^\gamma+\eps^\gamma\Big),
\end{array}
\]
finishing the proof.
\end{proof}

\section{Discrete gradient and convergence}
\label{sec:convergence}

As mentioned in the introduction, this article is partly motivated by the DPPs obtained in stochastic game theory.
These formulas play a key role when studying the connection between stochastic games and PDEs as explained for example in \cite{peresssw09, peress08, manfredipr12, blancr19b,lewicka20}.

Let us briefly describe how this connection is established.
A bounded domain $\Omega\subset \R^n$ and a final payoff function $g:\R^n\setminus\Omega\to \R$ are given.
Then, the value function $u^\eps$ for the game or for the possibly controlled stochastic process verifies 
\begin{align*}
\begin{cases}
\L_\eps u^\eps=0,&\text{ in }\Omega, \\
u^\eps=g,& \text{ in }\R^n\setminus\Omega.
\end{cases}
\end{align*}
Under suitable assumptions it can be proven that there exists a function $u$ and a subsequence such that $u^\eps\to u$ uniformly in $\overline{\Omega}$,
and the function $u$ is a solution to a limit local PDE, $\L u=0$.

The nature of the operator $\L_\eps$ and the limit differential operator $\L$
depends on the particular rules of the game.
For example in \cite{brustadlm20}, a controlled process associated to $\L_\eps^+$ is presented and, in the limit, the dominative $p$-Laplacian operator arises. Also many other examples are possible.

To obtain the convergent subsequence the following Arzel\`{a}-Ascoli type lemma from \cite{manfredipr12} is often used.

\begin{lemma}
\label{lemAA}
Let $\{u^\eps : \overline{\Omega} \to \R,\ \eps>0\}$ be a set of functions such that
\begin{enumerate}
\item there exists $C>0$ such that $\abs{u^\eps (x)}<C$ for
    every $\eps>0$ and every $x \in \overline{\Omega}$,
\item given $\eta>0$ there are constants
    $r_0$ and $\eps_0$ such that for every $\eps < \eps_0$
    and any $x, y \in \overline{\Omega}$ with $|x - y | < r_0 $
    it holds
$$
|u^\eps (x) - u^\eps (y)| < \eta.
$$
\end{enumerate}
Then, there exists a function $u: \overline{\Omega} \to \R$ and a subsequence such that
\[
\begin{split}
u^{\eps}\to u \qquad\textrm{ uniformly in}\quad\overline{\Omega},
\end{split}
\]
as $\eps\to 0$.
\end{lemma}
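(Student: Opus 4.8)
The plan is to run the classical Arzel\`a--Ascoli diagonal argument, the only twist being that condition (2) is merely \emph{asymptotic} equicontinuity (it holds for $\eps<\eps_0$ rather than for all $\eps$), and the $u^\eps$ need not themselves be continuous. Neither issue causes trouble because we only ever invoke condition (2) along a sequence $\eps\to0$.

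First I would fix a countable dense set $D=\{x_1,x_2,\dots\}\subset\overline\Omega$ (possible since $\overline\Omega$ is compact, hence separable). By assumption (1) the scalar sequence $\{u^\eps(x_1)\}_{\eps>0}$ is bounded, so some sequence $\eps^{(1)}_j\to0$ makes $u^{\eps^{(1)}_j}(x_1)$ converge; passing to a further subsequence $\eps^{(2)}_j$ makes $u^{\eps^{(2)}_j}(x_2)$ converge as well, and so on. Taking the diagonal sequence $\eps_j:=\eps^{(j)}_j$ we obtain $\eps_j\to0$ such that $\lim_j u^{\eps_j}(x_k)$ exists for every $k$.

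Next I would show $\{u^{\eps_j}\}_j$ is uniformly Cauchy on $\overline\Omega$. Given $\eta>0$, apply (2) with $\eta/3$ in place of $\eta$ to get $r_0,\eps_0>0$. By compactness cover $\overline\Omega$ by finitely many balls $B_{r_0}(x_{i_1}),\dots,B_{r_0}(x_{i_m})$ with centres in $D$. Since $u^{\eps_j}(x_{i_\ell})$ converges for each of the finitely many $\ell$, there is $J$ such that for $j,k\ge J$ we have $\eps_j,\eps_k<\eps_0$ and $|u^{\eps_j}(x_{i_\ell})-u^{\eps_k}(x_{i_\ell})|<\eta/3$ for all $\ell=1,\dots,m$. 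Then for an arbitrary $x\in\overline\Omega$, choosing $\ell$ with $|x-x_{i_\ell}|<r_0$ and using (2) twice (legitimate because $\eps_j,\eps_k<\eps_0$),
\[
|u^{\eps_j}(x)-u^{\eps_k}(x)|
\le |u^{\eps_j}(x)-u^{\eps_j}(x_{i_\ell})|
+|u^{\eps_j}(x_{i_\ell})-u^{\eps_k}(x_{i_\ell})|
+|u^{\eps_k}(x_{i_\ell})-u^{\eps_k}(x)|
<\eta .
\]
Taking the supremum over $x$ shows $\|u^{\eps_j}-u^{\eps_k}\|_{L^\infty(\overline\Omega)}<\eta$ for $j,k\ge J$.

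Finally, a uniformly Cauchy sequence of bounded functions converges uniformly: define $u(x):=\lim_j u^{\eps_j}(x)$ pointwise (the limit exists by the Cauchy property at each $x$), and letting $k\to\infty$ in the previous display gives $\|u^{\eps_j}-u\|_{L^\infty(\overline\Omega)}\le\eta$ for $j\ge J$, i.e.\ $u^{\eps_j}\to u$ uniformly on $\overline\Omega$. I expect no genuine obstacle here; the only point requiring care is bookkeeping the threshold $\eps_0$ so that condition (2) is applied only to indices $\eps_j$ small enough, which is automatic once we work along the sequence $\eps_j\to0$.
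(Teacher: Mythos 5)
Your argument is correct, and it is the standard proof of this statement: the paper itself does not prove Lemma \ref{lemAA} but imports it from \cite{manfredipr12}, where the argument is precisely the diagonal extraction over a countable dense subset followed by the three-term triangle inequality, with the only adaptation being that the asymptotic equicontinuity (2) is invoked only for indices below the threshold $\eps_0$ -- exactly as you do. No gaps; nothing further is needed.
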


In order to apply the lemma, the regularity required in {\it (2)} is the key point.
In the game setting the regularity is obtained for example by an analysis of the strategies available to the players combined with the regularity of the boundary of the domain. 
In the interior, the regularity can also be obtained by Lemma~\ref{HolderL+L-}, and the existence of the convergent subsequence follows.

Following these ideas, in this section, we consider $u^\eps$ a bounded family of solutions to 
$$\L_\eps u^\eps=0$$ such that $$u^\eps\to u$$ uniformly in $\Omega$.

Our goal now is to use the obtained regularity estimates to pass to the limit with a discrete gradient. We show that the discrete gradients converge uniformly to the gradient of the limit which is H\"older continuos. Thus, this implies $C^{1,\gamma}$-regularity for solutions to the corresponding limiting PDE.

We define a discrete partial derivative
\[
u^\eps_{\eps e}(x)=\frac{u^{\eps}(x+\eps e)-u^{\eps}(x)}{\varepsilon}
\]
and a discrete gradient
\[
\nabla^\eps u^\eps (x)=(u^\eps_{\eps e_1}(x),\ldots,u^\eps_{\eps e_n}(x)),
\]
where $(e_1,..,e_n)$ denotes the canonical basis.
Observe that we use the notation $u^\eps_{\eps e}$ that 
is similar to the one used in the previous section for a different object.

\begin{proposition}
Let  $u^\eps$ be a bounded family of solutions to $\L_\eps u^\eps=0$ such that $u^\eps\to u$ uniformly in $\Omega$.
Then $u\in C^{1,\gamma}(\Omega)$ and
\[
 \nabla^\eps u^\eps \to  \nabla u,
\]
locally uniformly, with the local estimate
\[
|u^{\eps}(y)-u^{\eps}(x)-\nabla^\eps u^{\eps}(x)(y-x)|
\leq  C(|x-y|^{\gamma+1} + \eps)
\]
that implies 
\[
|u (y)-u (x)-\nabla u(x)(y-x)|
\leq  C|x-y|^{\gamma+1} 
\]
for the limit.
\end{proposition}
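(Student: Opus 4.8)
The plan is to read the asserted bound as a consequence of Theorem~\ref{teo.C1a} applied to $u^\eps$ (with increment $h=\eps$, so that $u^\eps_{\eps e_j}$ is, up to a factor $2$, the quotient $u_h^1$ of that theorem), then to pass to a limit for the discrete gradients by the Arzel\`a--Ascoli Lemma~\ref{lemAA}, to identify the limit with $\nabla u$, and finally to let $\eps\to0$ in the discrete estimate. Throughout, $\gamma$ is the exponent of Theorem~\ref{HolderL+L-}, and since the family is bounded by hypothesis ($\sup_\eps\|u^\eps\|_{L^\infty(\Omega)}<\infty$) all constants below are uniform in $\eps$. Fix once and for all a subdomain $\Omega'\subset\subset\Omega$; covering $\Omega'$ by finitely many balls contained in $\Omega$, Theorems~\ref{HolderL+L-}, \ref{Lipschitz} and \ref{teo.C1a} apply on $\Omega'$ for all sufficiently small $\eps$.

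\emph{Step 1 (the discrete estimate).} For $\eps$ small, Theorem~\ref{Lipschitz} gives $|u^\eps(x)-u^\eps(z)|\le C(|x-z|+\eps)$ on $\Omega'$, hence each $u^\eps_{\eps e_j}$ is bounded on $\Omega'$ uniformly in $\eps$, and Theorem~\ref{teo.C1a}, estimate \eqref{con34a}, gives $|u^\eps_{\eps e_j}(x)-u^\eps_{\eps e_j}(z)|\le C(|x-z|^\gamma+\eps^\gamma)$ on $\Omega'$. From these two facts I would first prove the one-dimensional bound $|u^\eps(x+se_j)-u^\eps(x)-s\,u^\eps_{\eps e_j}(x)|\le C(|s|^{1+\gamma}+\eps)$ as follows: write $s=N\eps+r$ with $N\in\N\cup\{0\}$ and $0\le r<\eps$, telescope $u^\eps(x+N\eps e_j)-u^\eps(x)=\eps\sum_{k=0}^{N-1}u^\eps_{\eps e_j}(x+k\eps e_j)$, estimate $\big|\eps\sum_{k=0}^{N-1}\big(u^\eps_{\eps e_j}(x+k\eps e_j)-u^\eps_{\eps e_j}(x)\big)\big|\le C\eps\sum_{k=0}^{N-1}\big((k\eps)^\gamma+\eps^\gamma\big)\le C\big((N\eps)^{1+\gamma}+N\eps^{1+\gamma}\big)$, bound the leftover pieces $|u^\eps(x+se_j)-u^\eps(x+N\eps e_j)|$ and $|r\,u^\eps_{\eps e_j}(x)|$ by $C\eps$ using Theorem~\ref{Lipschitz} and the boundedness of $u^\eps_{\eps e_j}$, and finally distinguish the case $|s|\ge\eps$ (where $N\eps^{1+\gamma}=(N\eps)\eps^\gamma\le|s|^{1+\gamma}$) from $|s|<\eps$ (where $N=0$ and the whole left-hand side is $\le C\eps$ directly). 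The estimate of the Proposition then follows by moving one coordinate at a time along the polygonal path $x\to x+(y_1-x_1)e_1\to\cdots\to y$: one sums $n$ copies of the one-dimensional bound and, at the cost of $C|x-y|(|x-y|^\gamma+\eps^\gamma)$ via \eqref{con34a}, replaces $u^\eps_{\eps e_j}$ at the intermediate vertices by its value at $x$; the cross term is absorbed using $|x-y|\eps^\gamma\le|x-y|^{1+\gamma}+\eps$.

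\emph{Step 2 (limit of the discrete gradients and $C^{1,\gamma}$-regularity).} On $\overline{\Omega'}$ the family $\{u^\eps_{\eps e_j}\}$ is uniformly bounded, and \eqref{con34a} provides precisely the oscillation control required in hypothesis $(2)$ of Lemma~\ref{lemAA} (given $\eta$, pick $r_0,\eps_0$ with $Cr_0^\gamma,C\eps_0^\gamma<\eta/2$). Hence, along a subsequence, $u^\eps_{\eps e_j}\to v_j$ uniformly on $\overline{\Omega'}$, and letting $\eps\to0$ in \eqref{con34a} shows that $v_j$ is $\gamma$-H\"older. Passing to this subsequence in the estimate of Step 1 and using $u^\eps\to u$ uniformly yields $|u(y)-u(x)-v(x)\cdot(y-x)|\le C|x-y|^{1+\gamma}=o(|x-y|)$ with $v=(v_1,\dots,v_n)$, so $u$ is differentiable at every point of $\Omega'$ with $\nabla u=v$; since $v$ is $\gamma$-H\"older, $u\in C^{1,\gamma}(\Omega')$, and $\Omega'\subset\subset\Omega$ being arbitrary, $u\in C^{1,\gamma}(\Omega)$. (Alternatively, $u\in C^{1,\gamma}$ follows directly by letting $\eps\to0$ in the symmetric second-difference bound \eqref{con34b}, which gives the classical characterization $|u(x)-2u(\tfrac{x+y}{2})+u(y)|\le C|x-y|^{1+\gamma}$.) As the only possible subsequential limit is $\nabla u$, independent of the subsequence, the full family converges, $\nabla^\eps u^\eps\to\nabla u$ locally uniformly. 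Letting $\eps\to0$ in the estimate of Step 1 for the full family then gives $|u(y)-u(x)-\nabla u(x)(y-x)|\le C|x-y|^{1+\gamma}$, completing the proof.

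\emph{Main obstacle.} The delicate point is the $\eps$-scale bookkeeping in Step 1: because $u^\eps$ need not be continuous and the process only resolves scales $\gtrsim\eps$, the telescoping has to be organized together with the case distinction $|x-y|\ge\eps$ versus $|x-y|<\eps$ so that the accumulated error is exactly of the form $C(|x-y|^{1+\gamma}+\eps)$ — equivalently $o(|x-y|)$ in the limit. Once this is in place, the rest is a routine combination of \eqref{con34a}, the Lipschitz estimate, Lemma~\ref{lemAA}, and the elementary fact that a continuous function whose first-order Taylor remainder is $O(|x-y|^{1+\gamma})$ is of class $C^{1,\gamma}$.
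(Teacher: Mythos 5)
Your proposal is correct and follows essentially the same route as the paper: uniform boundedness and asymptotic H\"older continuity of the discrete partial derivatives via Theorems~\ref{Lipschitz} and~\ref{teo.C1a}, a coordinate-by-coordinate telescoping along a polygonal path to get the discrete first-order Taylor estimate with error $C(|x-y|^{1+\gamma}+\eps)$, then Lemma~\ref{lemAA} plus uniqueness of the limit to identify $\nabla u$ and upgrade to convergence of the full family. The only cosmetic difference is that you isolate the one-dimensional bound and the case split $|s|\gtrless\eps$ explicitly, whereas the paper folds this into the bound $k_{i+1}\eps\le|x-y|$ and the absorption $|x-y|\eps^\gamma\le|x-y|^{1+\gamma}+\eps$; the content is identical.
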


\begin{proof}
Let $V$ be a domain such that $\overline{V}\subset\Omega$.
By Theorem~\ref{teo.C1a}, we have that $u^\eps_{\eps e}$ is asymptotically H\"older and uniformly bounded. 
This implies that it is asymptotically continuous, that is, the conditions
of the Arzel\`a-Ascoli type lemma (Lemma \ref{lemAA}) are fulfilled.  
Then, applying Lemma \ref{lemAA} for each coordinate, we obtain that there exists a subsequence such that 
$$
\nabla^\eps u^\eps (x) \to D (x)
$$
uniformly in $\overline{V}$ for some function $D:\overline{V}\to\R^n$.

We observe that by passing to the limit in the estimate provided by Theorem~\ref{teo.C1a} we get that components of $D:\overline{V}\to\R^n$ are H\"older continuous.

Now, we want to prove that this function is indeed the gradient of the limit function $u$.
To do that we prove that the discrete gradient provides a linear function that locally approximates $u^\eps$, then we pass to the limit, and conclude that $D$ approximates $u$.
The computation below follows the idea employed to prove that a function with continuous partial derivatives is differentiable.

Given $x,y\in\R^n$, we define 
\[
k_i=\left\lfloor\frac{(y-x)_i}{\eps}\right\rfloor
\]
and
\[
\begin{split}
y^0&=x+\eps(0,\dots,0)=x\\
y^1&=x+\eps(k_1,0,\dots,0)\\
&\vdots\\
y^i&=x+\eps(k_1,\dots,k_i,0\dots,0)\\
&\vdots\\
y^n&=x+\eps(k_1,\dots,k_n).
\end{split}
\]
We have the estimate
\begin{align*}
|&u^\eps(y)-u^\eps(x)-\nabla^\eps u^\eps (x)\cdot(y-x)|\\
&\leq |u^\eps(y^n)-u^\eps(x)-\nabla^\eps u^\eps (x)\cdot (y^n-x)|
+|u^\eps(y)-u^\eps(y^n)| \\
& \quad +|\nabla^\eps u^\eps (x)\cdot (y-y^n)|.
\end{align*}
Since $|y_i-y^n_i|\leq \eps$, we have
\[
|u^\eps(y)-u^\eps(y^n)|+|\nabla^\eps u^\eps (x)\cdot(y-y^n)|\leq C\eps,
\]
and hence, we get
\begin{align*}
|&u^\eps(y)-u^\eps(x)-\nabla^\eps u^\eps (x)\cdot(y-x)|\nonumber\\
&\leq |u^\eps(y^n)-u^\eps(x)-\nabla^\eps u^\eps (x)\cdot (y^n-x)|+C\eps.
\end{align*} 

Next we write
\[
\begin{array}{l}
\displaystyle 
|u^\eps(y^n)-u^\eps(x)-\nabla^\eps u^\eps (x)\cdot(y^n-x)| \\[10pt]
\quad \displaystyle 
\leq \sum_{i=0}^{N-1}  \Big|u^\eps(y^{i+1})-u^\eps(y^i)- u^\eps_{\eps e_{i+1}} (x)(y^n-x)_{i+1} \Big|.
\end{array}
\]

Recalling that $(y^n-x)_{i+1}=\eps k_{i+1}$, we can estimate each term in the previous sum as 
\[
\begin{split}
& |u^\eps(y^{i+1})-u^\eps(y^i)- u^\eps_{\eps e_{i+1}} (x)\eps k_{i+1}| \\
&\leq 
\sum_{j=0}^{k_{i+1}-1}  \Big|u^\eps(y^i+(j+1)\eps e_{i+1})-u^\eps(y^i+j\eps e_{i+1})-u^\eps_{\eps e_{i+1}} (x)\eps  \Big|
\\
&\leq 
\sum_{j=0}^{k_{i+1}-1}\eps \Big|u^\eps_{\eps e_{i+1}}(y^i+j\eps e_{i+1})-u^\eps_{\eps e_{i+1}} (x)\Big|
\\
&\leq 
k_{i+1}\eps C(|x-y|^\gamma+\eps^\gamma)
\\
&\leq 
 C(|x-y|^{\gamma+1}+|x-y|\varepsilon^{\gamma}).
\end{split}
\]
Finally, combining the previous three estimates, we get
\[
\begin{split}
|u^\eps(y)-u^\eps(x)-\nabla^\eps u^\eps (x)\cdot(y-x)|
&\leq  C(|x-y|^{\gamma+1}+|x-y|\epsilon^{\gamma}+\eps)\\
&\leq  C(|x-y|^{\gamma+1}+\eps).
\end{split}
\]
Passing to the limit as $\eps \to 0$, we get
\[
|u(y)-u(x)-D(x)\cdot(y-x)|
\leq  C|x-y|^{\gamma+1}.
\]
Therefore we conclude that $D=\nabla u$ and that $u$ is differentiable.
We have obtained the convergence 
\[
\nabla^\eps u^\eps \to \nabla u,
\]
along a subsequence, locally uniformly.
Finally, from the uniqueness of the gradient, we get the convergence for the whole sequence.
\end{proof}

\section{Discontinuities}
\label{sec:dis}

In the proof of Lemma~\ref{lem:iter} we obtained an improved exponent for $\abs h$ from $\tau$ to $\min\{1,\sigma+\tau\}$ and for the error $\eps$ from $\tau$ to $\sigma+\tau$. 
In other words, the improvement of the exponent of the term $\abs h$ is bounded by 1 but the exponent of $\eps$ can exceed 1.
Actually, by adapting our argument in Lemma~\ref{lem:iter} and Theorem~\ref{Lipschitz} we can show that
\begin{corollary}
\label{cor:smaller-jumps}
There exist $\eps_0,\theta,C>0$ such that if a bounded Borel function $u$ satisfies  $\L_\eps u=0$ in $B_R$ with $\eps<\eps_0$, then for any $a>0$
\[
|u(x)-u(z)|\leq C\|{u}\|_{L^\infty(B_{R})}\Big(|x-z|+\eps^a\Big)
\]
for every $x,z \in B_{\theta R}$.
\end{corollary}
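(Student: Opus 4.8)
The plan is to revisit the bookkeeping in Lemma \ref{lem:iter} and Theorem \ref{Lipschitz}, observing that at each application of the iteration the exponent on the genuine length scale $|h|$ saturates at $1$, but the exponent on the error term $\eps$ continues to grow by $\gamma$ with every round. Concretely, as in the proof of Theorem \ref{Lipschitz}, I would start from the asymptotic H\"older estimate of Theorem \ref{HolderL+L-} applied to $u$, form the difference quotient $u_h^{\gamma}$, check the extremal inequalities \eqref{eq:quotient-super-sub}, apply Theorem \ref{HolderL+L-} to the quotient, and feed the result into Lemma \ref{lem:iter} with $\sigma=\tau=\gamma$. This produces, for $R=1$ (the general case following by the scaling $\tilde u(x)=u(xR)$, $\tilde\eps=\eps/R$ discussed before Lemma \ref{lem:iter}),
\[
|u(x+he)-u(x)|\le C\|u\|_{L^{\infty}(B_{2R})}\bigl(|h|^{\min\{1,2\gamma\}}+\eps^{2\gamma}\bigr)
\]
on a slightly smaller ball. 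The key point is that in the bound for $w(h)$ inside the proof of Lemma \ref{lem:iter}, the term carrying $\eps$ ends up as $\eps^{\sigma+\tau}$ with no truncation, whereas the $|h|$-term is capped at exponent $1$.

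Next I would iterate: having reached exponent $1$ (up to an arbitrarily small loss) on $|h|$, one still keeps choosing a difference quotient $u_h^{\sigma}$ with $\sigma$ close to $1$ and applies Lemma \ref{lem:iter} again with $\tau=\gamma$; the $|h|$-exponent stays at $1$ while the $\eps$-exponent becomes $\sigma+\gamma$, then $\sigma+2\gamma$, and so on. (One must, as in Theorem \ref{Lipschitz}, fix $\gamma$ small enough that $k\gamma\neq 1$ and more generally avoid the borderline $\sigma+k\gamma=1$, so that Lemma \ref{lem:iter} applies at each step; this is harmless since Theorem \ref{HolderL+L-} holds for all smaller exponents.) After $k$ iterations one obtains, on a ball $B_{c^k R}$ for a fixed $c\in(0,1)$,
\[
|u(x)-u(z)|\le C\|u\|_{L^{\infty}(B_{R})}\bigl(|x-z|+\eps^{k\gamma}\bigr),
\]
where the constant $C$ depends on $k$ (through the geometric-series constants $1/|1-2^{1-(\sigma+\tau)}|$ appearing in Lemma \ref{lem:iter}) but not on $\eps$. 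Given $a>0$, choose $k$ with $k\gamma\ge a$ and $\eps_0<1$; then $\eps^{k\gamma}\le\eps^{a}$ for $\eps<\eps_0$, and $\theta=c^k$ gives the claim.

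The main obstacle — and the only genuinely delicate point — is that the number of iterations $k$ depends on $a$, and each iteration shrinks the ball by a fixed factor $c$ and inflates the constant $C$; one has to make sure the statement is still uniform in $\eps$, i.e.\ that $\eps_0$, $\theta$ and $C$ may legitimately depend on $a$ (and on the structural constants $\Lambda,\alpha,\beta,n$) but not on $\eps$. This is exactly the situation already handled in the proof of Theorem \ref{Lipschitz}, where finitely many iterations are performed with $\eps$-independent constants; the present argument differs only in that the stopping rule is $k\gamma\ge a$ rather than $k\gamma>1$. A secondary bookkeeping point is to verify that the scaling reductions before Lemma \ref{lem:iter} still apply, so that it suffices to argue for $R=1$; the term $\eps^{a}$ scales to $(\eps/R)^{a}=\eps^{a}/R^{a}$, which is consistent with absorbing $R$-powers into $C$ as in the other results, so no new difficulty arises there.
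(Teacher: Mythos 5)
Your overall strategy is exactly the one the paper intends (its own ``proof'' is a one-line pointer to the end of the proof of Theorem~\ref{Lipschitz}): re-run the quotient/iteration machinery of Lemma~\ref{lem:iter}, observing that the exponent of $|h|$ saturates at $1$ while the exponent of $\eps$ keeps improving, stop after finitely many rounds depending on $a$, and accept that $\theta$ and $C$ depend on $a$ but not on $\eps$. The scaling remarks are also fine. However, there is a genuine gap at the step where you claim that ``the $\eps$-exponent becomes $\sigma+\gamma$, then $\sigma+2\gamma$, and so on'' by repeatedly applying Lemma~\ref{lem:iter} with $\sigma$ close to $1$. In Lemma~\ref{lem:iter} the quotient is $u_h^{\sigma}=R^{\sigma}(u(x+he)-u(x))/(|h|^{\sigma}+\eps^{\sigma})$, with the \emph{same} exponent $\sigma$ on $|h|$ and on $\eps$ in the denominator, and the output $\eps$-exponent is exactly $\sigma+\tau=\sigma+\gamma$. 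To apply Theorem~\ref{HolderL+L-} to the quotient you must first know it is bounded, and once you only control $|u(x+he)-u(x)|\leq C(|h|+\eps^{b})$ this forces $\sigma\leq 1$ (for $\sigma>1$ the ratio behaves like $|h|^{1-\sigma}$ as $h\to 0$). Hence every further application of Lemma~\ref{lem:iter} as stated returns at best $\eps^{1+\gamma}$: the improved exponent $b$ you have just gained is never fed back into the denominator of the next quotient, so the exponents do not accumulate beyond $1+\gamma$.

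To reach arbitrary $a$ you must decouple the two exponents: work with $v_h(x)=(u(x+he)-u(x))/(|h|^{\sigma}+\eps^{b})$, with $\sigma<1$ fixed and $b$ increasing along the iteration. This is still bounded and still satisfies the extremal inequalities, and Theorem~\ref{HolderL+L-} together with the telescoping argument now produces the four terms $|h|^{\sigma+\gamma}$ (which after summation is absorbed into $|h|$ as before), $\eps^{\gamma}|h|^{\sigma}$, $\eps^{b}|h|^{\gamma}$ and $\eps^{b+\gamma}$; the pure $\eps$-term is $\eps^{b+\gamma}$, so $b$ genuinely increases by $\gamma$ per round. The cross terms then need care: $\eps^{\gamma}|h|^{\sigma}\leq|h|^{\sigma}$ is harmless for propagating the invariant $|u(x+he)-u(x)|\leq C(|h|^{\sigma}+\eps^{b})$, but at the final step, when converting $|h|^{\sigma}$ into $|h|+\eps^{a}$, one checks that $\eps^{\gamma}|h|^{\sigma}\leq C(|h|+\eps^{a})$ holds only if $\sigma\geq 1-\gamma/a$, so $\sigma$ must be chosen close to $1$ depending on $a$ (which also enters the constants through $(2^{1-\sigma}-1)^{-1}$). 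This is the ``adaptation'' of Lemma~\ref{lem:iter} that the corollary actually requires and that the paper also leaves implicit; as literally written, your iteration (like the cited end of the proof of Theorem~\ref{Lipschitz}) only delivers exponents $a<1+\gamma$.
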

For a proof of this corollary we refer to the end of the proof of Theorem~\ref{Lipschitz}.

Knowing that the value functions can be discontinuous, the above estimate may look counterintuitive. However, the radius $\theta R$ depends on the exponent $a>0$, and we have to iterate more times for a larger $a$.
We conclude that the jumps of the function are smaller if we are far away from the boundary of the domain.

We illustrate this phenomenon with one dimensional examples modified from \cite[Example 2.2]{manfredipr12}.
Let $\Omega=(0,1)$ and $g:\R\setminus(0,1)\to \R$ given by 
\begin{align*}
g(x)=\begin{cases}
0 &x \le 0, \\
 1& x \ge 1.  
\end{cases}
\end{align*}
We consider the solutions to the two different DPPs: 
\begin{equation}
\label{eq:dppinf1d}
u(x)=\frac{u(x+\eps)+u(x-\eps)}{2}
\end{equation}
and
\begin{equation}
\label{eq:dppp1d}
u(x)=\alpha\frac{u(x+\eps)+u(x-\eps)}{2}+\beta\aveint{x-\eps}{x+\eps}u(y)\,dy,
\end{equation}
with $\alpha\ge 0,\beta>0, \alpha+\beta=1$.
The first DPP corresponds to the one dimensional equation for the tug-of-war game, see \cite{peresssw09}, and does not satisfy our assumptions since it is not uniformly elliptic.
The second one corresponds to the one dimensional tug-of-war with noise as well as both $\L^+_\eps u=0$ and $\L^-_\eps u=0$.
In Figure~\ref{fig:stair} and Figure~\ref{fig:smooth} we present the solutions to these DPPs.

\begin{figure}
\includegraphics[scale=0.5]{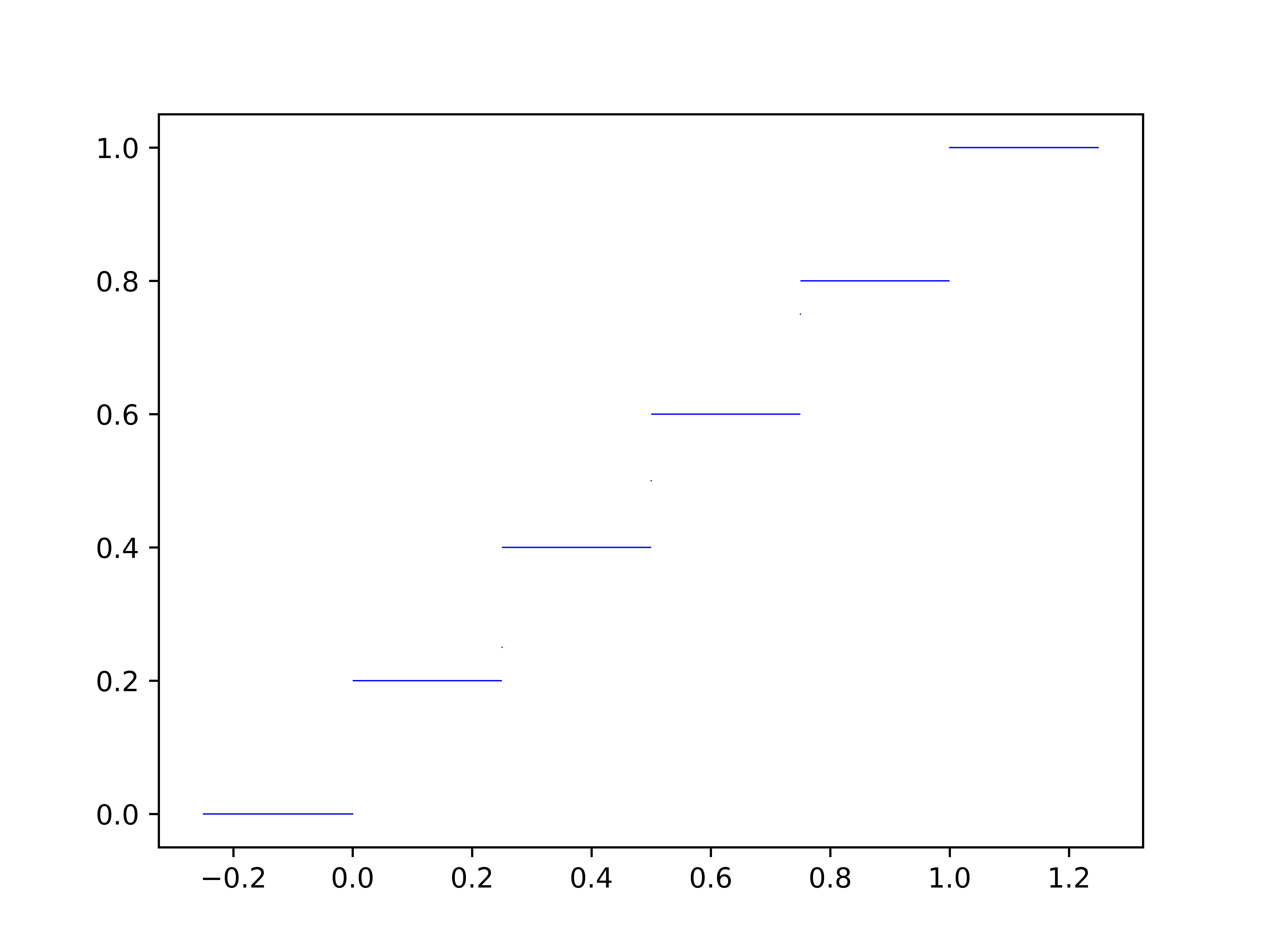}
\caption{The solution to the DPP \eqref{eq:dppinf1d} for $\eps=1/5$.}
\label{fig:stair}
\end{figure}

\begin{figure}
\includegraphics[scale=0.5]{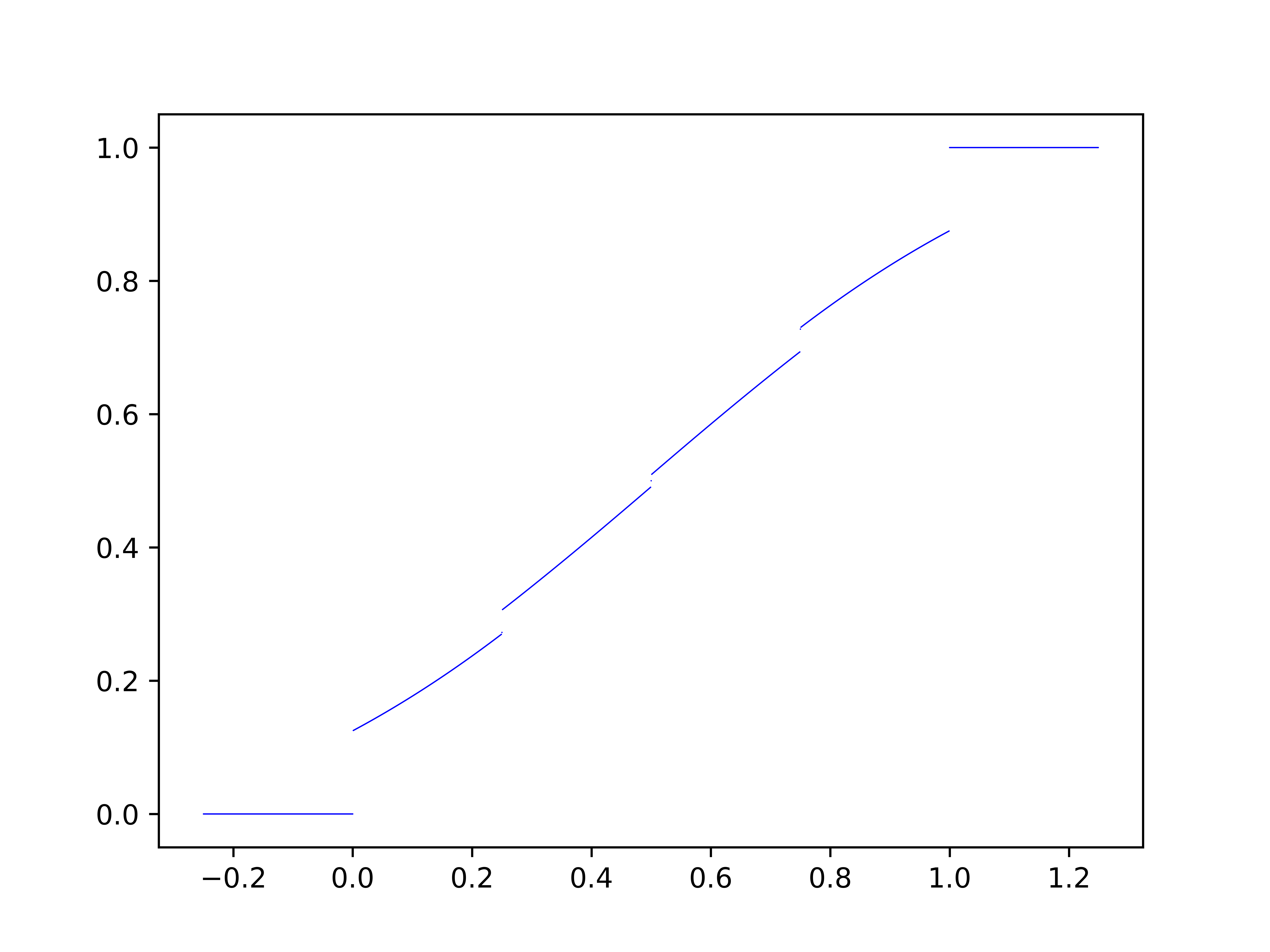}
\caption{The solution to the DPP \eqref{eq:dppp1d} for $\eps=1/5$ and ${\alpha}=\beta=1/2$.}
\label{fig:smooth}
\end{figure}
  
In Figure~\ref{fig:stair} we can observe that all the discontinuities, $\limsup_{y\to x}u^{\eps}(y)-\liminf_{y\to x}u^{\eps}(y)$, are of size $\eps$, meanwhile in Figure~\ref{fig:smooth} the jumps get smaller as we move away from the boundary of the domain.

This phenomenon holds also in general.
Let $\Omega\subset\R^n$ be a bounded domain, $g:\R^n\setminus\Omega\to \R^n$ a bounded function and we consider the solution to the DPP given by $\L^+_\eps u=0$, that is
\begin{equation}
\label{DPPL+1}
u(x)=\alpha \sup_{\nu\in B_1} \frac{u(x+\eps \nu)+u(x-\eps \nu)}{2}
+\beta\vint_{B_1} u(x+\eps y)\,dy, \qquad x \in \Omega, 
\end{equation}
with
$$
u(x) = g(x), \qquad \qquad x \in \mathbb{R}^n \setminus \Omega.
$$
We have taken $\Lambda=1$ to simplify the formulas but the result is valid in general.
Since $\|u\|_{L^{\infty}}\leq \|g\|_{L^{\infty}}$ we know that the jumps in the discontinuities are bounded by $2\|g\|_{L^{\infty}}$.
Inside the domain, if we look at the RHS of \eqref{DPPL+1} we have that $\beta \vint_{B_\eps(x)}u^{\eps}(y)\,dy$ is continuous so the jumps will be at most of size $2\alpha\|g\|_{L^{\infty}}$.

We can iterate this argument.
Let
\[
\Omega_\delta=\{x\in\Omega:\dist(x,\partial\Omega)>\delta\}.
\]
Then, in $\Omega_\eps$ the jumps will be at most of size $2\alpha^2\|g\|_{L^{\infty}}$.
Therefore, in general we get that given $x\in\Omega$, the jump of a possible discontinuity at that point is at most of size
\[
2 \alpha^{\left\lceil\frac{\dist(x,\partial\Omega)}{\eps}\right\rceil} \|g\|_{L^{\infty}}
\]
that goes to zero exponentially fast as $\eps \to 0$ (since $\alpha <1$), and thus this error is consistent with our result.

Notice also that when $\alpha =0$ then the solution to the DPP is continuous inside $\Omega$
but may have a discontinuity (of size $\varepsilon$) on the boundary $\partial \Omega$.
We can also consider solutions to the DPP given by the tug-of-war with noise introduced in \cite{manfredipr12}, that is
\begin{align}
\label{eq:dppp}
u^{\eps}(x)=
\frac{\alpha}{2}\left(\sup_{B_\eps(x)} u^{\eps}+\inf_{B_\eps(x)}u^{\eps}\right)+\beta\vint_{B_\eps(x)}u^{\eps}(y)\,dy.
\end{align}
The same argument that controls the size of the discontinuities also applies to solutions of this DPP.

 Since the observations above might be of independent interest, we state them as a proposition.
\begin{proposition}
\label{prop:sizejumps}
Let $u^{\eps}$ be a solution to \eqref{DPPL+1} or \eqref{eq:dppp}. Then for $x\in \Om\subset \R^n$, it holds that the jumps of $u^{\eps}$ can be at the most of size
\begin{align*}
2\|g\|_{L^{\infty}} \alpha^{\left\lceil\frac{\dist(x,\partial\Omega)}{\eps}\right\rceil}.
\end{align*}
\end{proposition}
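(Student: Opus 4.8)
The plan is to make rigorous the iterative argument sketched in the text. Fix $x \in \Omega$ and set $m = \lceil \dist(x,\partial\Omega)/\eps \rceil$; the claim is that the jump of $u^\eps$ at $x$, namely $\limsup_{y\to x} u^\eps(y) - \liminf_{y\to x} u^\eps(y)$, is bounded by $2\|g\|_{L^\infty}\alpha^m$. The starting point is the trivial bound $\|u^\eps\|_{L^\infty(\R^n)} \le \|g\|_{L^\infty}$, which holds for both DPPs by the usual comparison/maximum principle argument for the operators $\L_\eps^+$ and the tug-of-war-with-noise operator (the right-hand side is an average of values of $u^\eps$ plus, for the $\L_\eps^+$ case, a supremum over $\nu\in B_1$ of such averages, so $u^\eps$ cannot exceed $\sup g$ or fall below $\inf g$). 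Consequently every jump is at most $2\|g\|_{L^\infty}$, which is the $m=0$ statement.

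The core step is a "jump-reduction" lemma: if the oscillation of $u^\eps$ near \emph{every} point of a set $A$ is at most $J$, and $x$ is such that $\overline{B_\eps(x)}\subset A$, then the jump of $u^\eps$ at $x$ is at most $\alpha J$. To see this for \eqref{DPPL+1}, write the right-hand side at a point $z$ near $x$ as $\alpha\, S(z) + \beta\, (\kint_{B_\eps(z)} u^\eps)$ where $S(z) = \sup_{\nu\in B_1}\tfrac12(u^\eps(z+\eps\nu)+u^\eps(z-\eps\nu))$. The averaging term $z\mapsto \kint_{B_\eps(z)} u^\eps(y)\,dy$ is continuous in $z$ (a convolution of a bounded function with the normalized indicator of a ball), hence contributes nothing to the jump at $x$. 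For the term $S(z)$: as $z$ ranges over a small neighborhood of $x$, each point $z\pm\eps\nu$ stays in a neighborhood of $x\pm\eps\nu \in B_\eps(x)\subset A$ of radius comparable to $|z-x|$, so $\limsup$ and $\liminf$ of $u^\eps$ at $z\pm\eps\nu$ differ by at most $J$; taking a supremum over $\nu$ (a $\sup$ of functions each with oscillation $\le J$ has oscillation $\le J$, using that $\sup$ is $1$-Lipschitz in sup norm) and combining the two $\pm$ evaluations shows $S$ has jump at most $J$ at $x$. Multiplying by $\alpha$ and adding the continuous piece gives jump $\le \alpha J$ for $u^\eps(x)$. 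The same reasoning works for \eqref{eq:dppp}, since $\sup_{B_\eps(x)}u^\eps$ and $\inf_{B_\eps(x)}u^\eps$ are each continuous in $x$ up to oscillation $J$ and are weighted by $\alpha/2 + \alpha/2 = \alpha$.

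With the lemma in hand, iterate: every jump anywhere is $\le 2\|g\|_{L^\infty}$; applying the lemma with $A = \R^n$ shows every jump in $\Omega_\eps = \{\dist(\cdot,\partial\Omega)>\eps\}$ is $\le 2\alpha\|g\|_{L^\infty}$; applying it again (now every point whose closed $\eps$-ball lies in $\Omega_\eps$, i.e.\ every point of $\Omega_{2\eps}$) gives jumps $\le 2\alpha^2\|g\|_{L^\infty}$ there; after $k$ steps, every jump in $\Omega_{k\eps}$ is $\le 2\alpha^k\|g\|_{L^\infty}$. Since $x\in\Omega_{(m-1)\eps}$ when $m = \lceil\dist(x,\partial\Omega)/\eps\rceil$ — indeed $\dist(x,\partial\Omega) > (m-1)\eps$ — we may take $k = m$ provided we are slightly careful: $x\in \Omega_{(m-1)\eps}$ means the closed ball $\overline{B_\eps(x)}$ is contained in $\Omega_{(m-1)\eps - \eps}=\Omega_{(m-2)\eps}$ only when $\dist(x,\partial\Omega)-\eps > (m-2)\eps$, which holds; one then gets jump $\le 2\alpha^{m-1}\cdot\alpha = 2\alpha^m\|g\|_{L^\infty}$ after the final application of the lemma at scale $x$ itself. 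This yields the stated bound.

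The main obstacle is the jump-reduction lemma, specifically the bookkeeping that the $\sup$ over $\nu\in B_1$ (and the $\sup$/$\inf$ over $B_\eps(x)$ in the tug-of-war case) does not create new discontinuities beyond those already present in $u^\eps$ near the relevant points — i.e.\ that taking suprema is compatible with the "oscillation $\le J$" bound. This is elementary once phrased correctly (suprema and infima are $1$-Lipschitz operations for the sup norm, and a uniform limit of continuity moduli passes through them), but it requires stating precisely which neighborhoods of which points are involved so that the containment $\overline{B_\eps(x)}\subset\Omega_{(k-1)\eps}$ lines up with the index arithmetic of $\lceil\cdot\rceil$. Everything else — the maximum principle for the base case, the continuity of the averaging term — is routine.
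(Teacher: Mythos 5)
Your strategy is exactly the paper's: the paper itself only offers the informal iteration in the paragraphs preceding the proposition (continuity of the $\beta$-average term, the $\sup$ term contributing a factor $\alpha$ times the oscillation of $u^\eps$ on $\overline{B_\eps(\cdot)}$, then induction over the sets $\Omega_{k\eps}$), and your jump-reduction lemma is the right way to make that precise; the semicontinuous-envelope/compactness argument needed to push the oscillation bound through the $\sup_{\nu\in B_1}$ is elementary, as you say.

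However, your index bookkeeping is off by one, and as written the final step does not deliver the stated exponent. In your scheme the first application of the lemma only gives the bound $2\alpha\|g\|_{L^\infty}$ on $\Omega_\eps$, hence $2\alpha^k\|g\|_{L^\infty}$ on $\Omega_{k\eps}$, so a point $x$ with $m=\lceil\dist(x,\partial\Omega)/\eps\rceil$ lies only in $\Omega_{(m-1)\eps}$ and inherits the bound $2\alpha^{m-1}\|g\|_{L^\infty}$. Your attempted repair is internally inconsistent: you verify $\overline{B_\eps(x)}\subset\Omega_{(m-2)\eps}$, where your induction gives jumps at most $2\alpha^{m-2}\|g\|_{L^\infty}$, so the ``final application'' yields $2\alpha^{m-1}\|g\|_{L^\infty}$, not $2\alpha^{m}\|g\|_{L^\infty}$; to write $2\alpha^{m-1}\cdot\alpha$ you would need $\overline{B_\eps(x)}\subset\Omega_{(m-1)\eps}$, i.e.\ $\dist(x,\partial\Omega)>m\eps$, which fails in general. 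The correct fix is at the start of the induction, and it is what the paper does: the base bound $J=2\|g\|_{L^\infty}$ holds on all of $\R^n$, so the hypothesis $\overline{B_\eps(x)}\subset A=\R^n$ of your lemma is satisfied at \emph{every} $x\in\Omega$ (the DPP holds in a neighbourhood of $x$ since $\Omega$ is open), and the first application already gives $2\alpha\|g\|_{L^\infty}$ on all of $\Omega=\Omega_0$. The induction then reads: jumps in $\Omega_{k\eps}$ are at most $2\alpha^{k+1}\|g\|_{L^\infty}$, and $x\in\Omega_{(m-1)\eps}$ gives exactly $2\alpha^{m}\|g\|_{L^\infty}$ with no extra step needed.
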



\newpage

On behalf of all authors, the corresponding author states that there is no conflict of interest.



\def\cprime{$'$} \def\cprime{$'$} \def\cprime{$'$}

\end{document}